\newtheorem{theorem}{Theorem}[section]
\newtheorem{lemma}[theorem]{Lemma}
\newtheorem{coro}[theorem]{Corollary}
\newtheorem{prop}[theorem]{Proposition}
\theoremstyle{definition}
\theoremstyle{remark}
\newtheorem{remark}[theorem]{Remark}
\numberwithin{equation}{section}
\newcommand{\Op}{\operatorname{Op}}
\newcommand{\nwc}{\newcommand}
\nwc{\eps}{\epsilon}
\nwc{\ep}{\epsilon}
\nwc{\vareps}{\varepsilon}
\nwc{\Oph}{\operatorname{Op}_\hbar}
\nwc{\la}{\langle}
\nwc{\ra}{\rangle}
\nwc{\mf}{\mathbf} 
\nwc{\blds}{\boldsymbol} 
\nwc{\ml}{\mathcal} 
\nwc{\defeq}{\stackrel{\rm{def}}{=}}
\nwc{\cE}{\ml{E}}
\nwc{\cN}{\ml{N}}
\nwc{\cO}{\ml{O}}
\nwc{\cP}{\ml{P}}
\nwc{\cU}{\ml{U}}
\nwc{\cV}{\ml{V}}
\nwc{\cW}{\ml{W}}
\nwc{\tU}{\widetilde{U}}
\nwc{\IN}{\mathbb{N}}
\nwc{\IR}{\mathbb{R}}
\nwc{\IZ}{\mathbb{Z}}
\nwc{\IC}{\mathbb{C}}
\nwc{\IT}{\mathbb{T}}
\nwc{\IS}{\mathbb{S}}
\nwc{\tP}{\widetilde{P}}
\nwc{\tPi}{\widetilde{\Pi}}
\nwc{\tV}{\widetilde{V}}
\nwc{\supp}{\operatorname{supp}}
\nwc{\rest}{\restriction}
\let \d \relax
\nwc{\d}{\partial}
\nwc{\Cor}{\mathscr{C}}
\nwc{\todo}[1]{$\clubsuit$ {\tt #1}}
\begin{document}

\title[Asymptotic regularity of sub-Riemannian eigenfunctions]{Asymptotic regularity of sub-Riemannian eigenfunctions in dimension $3$: the periodic case}

\author{Gabriel Rivi\`ere}
\address{Laboratoire de Math\'ematiques Jean Leray, Nantes Universit\'e, UMR CNRS 6629, 2 rue de la Houssini\`ere, 44322 Nantes Cedex 03, France}

\address{Institut Universitaire de France, Paris, France}

\email{gabriel.riviere@univ-nantes.fr}

\thanks{This work benefited from the support of the Institut Universitaire de France, of the Centre Henri Lebesgue (ANR-11-LABX-0020-01) and of the PRC grant ADYCT (ANR-20-CE40-0017) from the Agence Nationale de la Recherche.}

\subjclass[2020]{Primary 58J50; Secondary 58J40}
\date{October, 28th 2023.}

\dedicatory{Dedicated to the memory of Steve Zelditch}

\keywords{Hypoelliptic eigenmodes, semiclassical analysis, magnetic Schr\"odinger operators}

\begin{abstract}
On the unit tangent bundle of a compact Riemannian surface of constant nonzero curvature, we study semiclassical Schr\"odinger operators associated with the natural sub-Riemannian Laplacian built along the horizontal bundle. In that set-up, the involved Reeb flow is periodic and we show that high-frequency Schr\"odinger eigenfunctions enjoy extra regularity properties. As an application, we derive regularity properties for low-energy eigenmodes of semiclassical magnetic Schr\"odinger operators on the underlying surface by considering joint eigenfunctions with the Reeb vector field.
\end{abstract}

\maketitle

\section{Introduction}

Let $(M,g)$ be a smooth, compact, oriented and boundaryless Riemannian \emph{surface} which has \emph{constant} sectional curvature $K=\pm 1$. The unit tangent bundle of $M$ is defined by 
$$\mathcal{M}:=SM=\left\{q=(m,v)\in TM:\ \|v\|_{g(m)}=1\right\}.$$
There are two natural vector fields on $SM$: the geodesic vector field $X$ and the vertical vector field $V$, \emph{i.e.} the vector field corresponding to the action by rotation in the fibers of $SM$. One can then define $X_\perp:=[X,V]$ and these vector fields verify the following commutation relations~\cite[\S3.5.1]{PaternainSaloUhlmann22}:
\begin{equation}\label{e:commutators}
 [X_\perp,X]= \pm V,\quad [X,V]=X_\perp,\quad\text{and}\quad [X_\perp,V]=-X,
\end{equation}
where $\pm$ corresponds to the fact that the curvature is either $-1$ (minus case) or $1$ (plus case). The manifold $\mathcal{M}$ is naturally endowed with a Riemannian metric $g_S$ (the Sasaki metric) which makes $(X,X_\perp,V)$ into an orthonormal basis. The corresponding volume form that we will denote by $d\mu_L$ makes these three vector fields divergence free. Given a function $W\in\ml{C}^\infty(\ml{M},\IR)$, the goal of the present work is to study the asymptotic regularity of the eigenmodes of the (formally selfadjoint) operator
$$\ml{H}_h:=-h^2\Delta_{\text{sR}}+\delta_h^2W,\quad 0<h\leq 1$$
where $\Delta_{\text{sR}}:=X^2+X_\perp^2$ and where
\begin{equation}\label{e:strength-perturbation}
 \lim_{h\to 0^+}\delta_h=0,\quad\text{and}\quad\lim_{h\to 0^+}h\delta_h^{-1}=0.
 \end{equation}
 
 \begin{remark} The case $\delta_h=1$ was already considered in~\cite{ArnaizRiviere23} in the case of (variable) nonvanishing curvature (see Appendix~\ref{s:variable} for a brief reminder) while the restriction $\delta_h\gg h$ is maybe a shortcoming of our analysis (see \S\ref{r:term-order-3} for a more precise discussion).
 \end{remark}
 
Thanks to our curvature assumption, these operators are known to be hypoelliptic~\cite{Hormander67} and one can consider the Friedrichs extension of $\ml{H}_h$ which is the only semi-bounded selfadjoint extension\footnote{The domain $\mathcal{D}(\ml{H}_h)$ of this extension is independent of $h$ and verifies $H^2(\ml{M})\subset \mathcal{D}(\ml{H}_h)\subset H^1(\ml{M})$.} of $\ml{H}_h:H^2(\ml{M})\rightarrow L^2(\ml{M})$. This selfadjoint extension has compact resolvent and one can find an orthonormal basis of $L^2(\mathcal{M})$ made of eigenfunctions of $\ml{H}_h$ with eigenvalues $\lambda_j^2(h)\to \infty$ (as $j\to\infty$). By hypoelliptic regularity, these eigenmodes are smooth. See~\cite[App.~A]{ArnaizRiviere23} for a brief reminder on these spectral properties\footnote{Strictly speaking, this reference deals with the case where we replace $X$ by $V$ but the proof can be adapted verbatim to deal with the present set-up.}.

Our goal in the present work is to study the asymptotic regularity of these eigenfunctions,
\begin{equation}\label{e:eigenvalue-equation}
 \left(-h^2\Delta_{\text{sR}}+\delta_h^2W\right)\psi_h=E_h\psi_h,\quad\|\psi_h\|_{L^2}=1,\quad h>0,\quad\lim_{h\to 0^+} E_h=E_0>0,
\end{equation}
through the probability measures
$$\nu_{\psi_h}:a\in\ml{C}^0(\ml{M})\mapsto \int_{\ml{M}}a|\psi_h|^2d\mu_L.$$
More precisely, we say that a probability measure $\nu$ is a \emph{quantum limit} (for this spectral problem) if there exists a sequence $(\psi_{h_n},h_n)_{n\geq 1}$ of solutions to~\eqref{e:eigenvalue-equation} with $h_n\to 0^+$ such that 
$$\forall a\in\ml{C}^0(\ml{M}),\quad\lim_{n\rightarrow +\infty}\int_{\ml{M}}a|\psi_{h_n}|^2d\mu_L=\int_{\ml{M}}ad\nu.$$
In~\cite{ArnaizRiviere23}, we showed with Arnaiz that any such quantum limit $\nu$ can be decomposed as
\begin{equation}\label{e:decomposition-measure}
\nu=\nu_{\text{comp}}+\overline{\nu}_\infty+\sum_{k=0}^\infty\left(\nu_{k,\infty}^++\nu_{k,\infty}^-\right),
\end{equation}
where each term in the sum is a finite nonnegative Radon measure and where the measures $\overline{\nu}_\infty$ and $(\nu_{k,\infty}^\pm)_{k\geq 0}$ are all invariant by the flow $\varphi_V^t$ generated by the vertical vector field $V$. See Section~\ref{s:proof} for a brief reminder of the proof adapted to the setting of the present article. Recall also that $\nu_{\text{comp}}$ corresponds to the projection on $\ml{M}$ of the semiclassical measure~\cite{Burq97} obtained (up to extraction) from the sequence of solutions to~\eqref{e:eigenvalue-equation}. See~\S\ref{ss:compact} for more details.
\begin{remark}
In fact, the result in~\cite{ArnaizRiviere23} deals with the case where we replace $X$ by $V$ in the definition of $\Delta_{\text{sR}}$ but, as we shall see below, the geodesic flow in the setting of the present article is replaced by the periodic flow $\varphi_V^t$ induced by $V$. We also refer to~\cite{ColindeVerdiereHillairetTrelat15} for earlier results showing the invariance by $\varphi_V^t$ of the measure $\nu_{\infty}:=\nu-\nu_{\text{comp}}$ and to~\cite{BoilVuNgoc21, FermanianFischer21, FermanianLetrouit21, BurqSun22, ArnaizSun22} for other related models. As a consequence of this invariance by $\varphi_V^t$, it may happen that there exist subsequences of Schr\"odinger eigenfunctions such that $(\nu-\nu_{\text{comp}})(S_qM)>0$. We refer for instance to~\cite[\S3]{ColindeVerdiereHillairetTrelat15} for concrete examples in the case of sub-Riemannian Laplacians on compact quotients of the Heisenberg group $\mathbf{H}^1$ when $W\equiv 0$. More precisely, going through their argument, one can verify that, for any periodic orbit of the Reeb flow, one can find subsequences of eigenfunctions such that $(\nu_{0,\infty}^++\nu_{0,\infty}^-)$ puts its full mass on the periodic orbit. In fact, their construction would work as well for any choice of $k\geq0$. Our goal here is to show that, despite periodicity of the Reeb flow in our model, we can discard such concentration phenomena (at least for generic choices of potential $W$).
\end{remark}

\subsection{Main results}

The precise definition of the measures appearing in the decomposition~\eqref{e:decomposition-measure} is recalled in Section~\ref{s:chart} below. Let us just give an informal explanation before stating our main results. Due to the hypoelliptic nature of our problem, it follows that solutions to our eigenvalue problem oscillate at scales lying between $h^{-1}$ and $h^{-2}$ contrary to elliptic settings where they would oscillate exactly at the scale $h^{-1}$. See~\cite[\S8]{ArnaizRiviere23} for a concrete illustration of this phenomenon that is formulated in its full generality by the Rothschild-Stein Theorem~\cite{RothschildStein76}. The decomposition~\eqref{e:decomposition-measure} exactly captures these different scales of oscillations: the measure $\nu_{\text{comp}}$ describes the part of the solution oscillating at frequency $\omega\asymp h^{-1}$ while $\overline{\nu}_{\infty}$ the one oscillating at frequencies $h^{-1}\ll\omega\ll h^{-2}$. Finally, the measures $(\nu_{k,\infty}^\pm)_{k\geq 0}$ reflects the oscillations at scales $\omega\asymp h^{-2}$ which were shown to enjoy quantized features in~\cite{ArnaizRiviere23}. See Section~\ref{s:chart} for a precise formulation.

Our main Theorem shows that, in the specific setting of the article, the measures $(\nu_{k,\infty}^\pm)_{k\geq 0}$ enjoy more invariance properties (and thus more regularity):
\begin{theorem}\label{t:maintheo} Let $\nu$ be a quantum limit for the spectral problem~\eqref{e:eigenvalue-equation}. Then, for every $k\geq 0$ and for every $a\in\ml{C}^1(\ml{M})$, one has
 $$\int_{\ml{M}}\left(X\left(\widehat{W}_0\right)X_\perp-X_\perp\left(\widehat{W}_0\right)X\right)(a)d\nu_{k,\infty}^\pm=0,$$
 where
 $$\widehat{W}_0:=\frac{1}{2\pi}\int_0^{2\pi}W\circ\varphi_V^tdt.$$
\end{theorem}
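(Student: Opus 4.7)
The plan is a second-microlocal commutator argument. From the eigenvalue equation \eqref{e:eigenvalue-equation} one has $\la[\ml{H}_h,B_h]\psi_h,\psi_h\ra=0$ for any formally self-adjoint semiclassical operator $B_h$. Setting $Z:=X(\widehat W_0)X_\perp-X_\perp(\widehat W_0)X$, the goal is to choose $B_h$ so that, after suitable rescaling, this commutator identity converges to $\int_{\ml{M}} Z(a)\,d\nu_{k,\infty}^\pm=0$. I would take $B_h$ to be a symmetrization of $\Op(a)\,\Pi_{k,h}^\pm$, where $\Pi_{k,h}^\pm$ is the two-microlocal projector onto the $k$-th Landau band with $\pm$-Reeb frequency at scale $|\eta|\asymp h^{-2}$ used in Section~\ref{s:chart} (following \cite{ArnaizRiviere23}) to define $\nu_{k,\infty}^\pm$. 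Since $\Pi_{k,h}^\pm$ asymptotically commutes with $-h^2\Delta_{\text{sR}}$ (it diagonalizes it with the Landau scalar $(2k+1)h^2|\eta|\simeq E_0$) and with $\varphi_V^t$, the principal contribution $-h^2\la[\Delta_{\text{sR}},B_h]\psi_h,\psi_h\ra$ localizes on the band to order $o(\delta_h^2 h^2)$, and the identity is driven by the subprincipal term $\delta_h^2\la[W,B_h]\psi_h,\psi_h\ra$.

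The next step is to reduce $W$ to its Reeb average. Writing $W=\widehat W_0+(W-\widehat W_0)$, the second piece has vanishing $\varphi_V^t$-average; since both $\Pi_{k,h}^\pm$ and $\nu_{k,\infty}^\pm$ are $\varphi_V^t$-invariant (see Section~\ref{s:proof}), a standard Egorov-type averaging argument along $\varphi_V^t$ shows that its contribution to the matrix element vanishes in the limit $h\to 0^+$. One is then reduced to computing $\delta_h^2\la[\widehat W_0,\Op(a)\Pi_{k,h}^\pm]\psi_h,\psi_h\ra$. On the $k$-th Landau band the effective Planck constant is $1/|\eta|\simeq h^2$; the creation/annihilation structure generated by $(X,X_\perp)$ at fixed $\pm$-Reeb frequency together with \eqref{e:commutators} gives that the leading symbol of this commutator on the band is $\pm\frac{h^2}{i}(X(\widehat W_0)X_\perp(a)-X_\perp(\widehat W_0)X(a))=\pm\frac{h^2}{i}Z(a)$. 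Dividing by $\delta_h^2 h^2$ and letting $h\to 0^+$ yields $\int_{\ml{M}}Z(a)\,d\nu_{k,\infty}^\pm=0$ for both choices of sign.

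The main obstacle I expect is the precise symbolic computation of $[\widehat W_0,\Op(a)\Pi_{k,h}^\pm]$: the projector $\Pi_{k,h}^\pm$ is not a standard semiclassical pseudodifferential operator but a two-microlocal object concentrated at $|\eta|\asymp h^{-2}$, so the expansion has to be carried out within the refined calculus of \cite{ArnaizRiviere23}, combining a Heisenberg-type expansion in $(X,X_\perp)$ with a Fourier decomposition in the Reeb direction. One must carefully track the remainders produced by higher-order terms in \eqref{e:commutators} and by the off-band mixing induced by multiplication by $\widehat W_0$. The hypothesis $\delta_h\gg h$ from \eqref{e:strength-perturbation} is exactly what ensures that the $\delta_h^2 h^2$-subprincipal piece dominates the $h^4$-sized corrections coming from the third-order part of the Heisenberg expansion, consistent with the restriction discussed in the remark following the theorem statement.
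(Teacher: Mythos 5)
Your plan has the right skeleton (a self-adjoint commutator identity $\langle[\mathcal H_h,B_h]\psi_h,\psi_h\rangle=0$, localization near $hH_3\asymp\pm\tfrac{E_0}{2k+1}$, reduction of $W$ to its Reeb average by $\varphi_V^t$-invariance), and your discussion of the restriction $\delta_h\gg h$ is consistent with Section~\ref{r:term-order-3}. But the central technical device you propose -- a two-microlocal Landau-band projector $\Pi_{k,h}^\pm$ that ``asymptotically commutes with $-h^2\Delta_{\text{sR}}$'' -- is neither constructed in the paper nor in \cite{ArnaizRiviere23}, and it is genuinely different from what the paper does. The paper only cuts off in the Reeb frequency via $\theta_{\pm k}(hH_3)$ and compensates the lack of localization in $(H_1,H_2)$ by replacing $a$ with the normal-form symbol $\mathbf a$ of \eqref{e:normal-form-a}, constructed by solving the cohomological equation \eqref{e:cohomological}. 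That is the mechanism that makes $\{H_1^2+H_2^2,\mathbf a\}$ small on the support of the cutoffs; nothing in your write-up plays this role.

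There is also a structural difficulty with the projector route that you do not address. If $\Pi_{k,h}^\pm$ commuted exactly with $-h^2\Delta_{\text{sR}}$, then for the symmetrization $B_h=\tfrac12(\Op(a)\Pi_{k,h}^\pm+\Pi_{k,h}^\pm\Op(a))$ the eigenvalue equation lets you rewrite $-h^2\langle[\Delta_{\text{sR}},B_h]u_h,u_h\rangle$ entirely in terms of $\delta_h^2\langle[W,\cdot]u_h,u_h\rangle$-type matrix elements, and these cancel against the explicit $\delta_h^2\langle[W,B_h]u_h,u_h\rangle$ term, leaving $0=0$. So the claim that the ``principal contribution localizes on the band to order $o(\delta_h^2h^2)$'' is precisely where the content would have to be hidden: the identity carries information only through the subleading asymptotics of the quasi-projector, which is exactly as delicate as the normal form. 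Moreover, the leading symbol you assert for $[\widehat W_0,\Op(a)\Pi_{k,h}^\pm]$, namely $\pm\tfrac{h^2}{i}Z(a)$, is not what the calculation produces; the paper's test symbol $aH_3+H_2X(a)-H_1X_\perp(a)$ (the first two terms of $\mathbf a\,H_3$) yields the bracket $\{W,aH_3+H_2X(a)-H_1X_\perp(a)\}$, and the resulting expression is $aV(W)+X(a)X_\perp(W)-X_\perp(a)X(W)$; only after invoking $V$-invariance of $\nu_{k,\infty}^\pm$ and $V(a)=0$ does this reduce to the antisymmetric bracket $X(W)X_\perp(a)-X_\perp(W)X(a)$. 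Your derivation skips the step that actually produces the $H_2X(a)-H_1X_\perp(a)$ correction.

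Finally, your ``Egorov-type averaging along $\varphi_V^t$'' to replace $W$ by $\widehat W_0$ at the operator level before the commutator computation is plausible but is done in a different order and with less precision than the paper, which proves the invariance $\nu_{k,\infty}^\pm(X(W)X_\perp(a)-X_\perp(W)X(a))=0$ for the full $W$ first (Proposition~\ref{p:mainprop}), then exploits that $\nu_{k,\infty}^\pm$ is $\varphi_V^t$-invariant and that $\mathbf A_+(\widehat W_n)\mathbf A_-(a)\in\ml C^\infty_n(\ml M)$ for $V(a)=0$, so that only the $n=0$ Fourier mode survives; the same reduction is then run on $a$. The upshot: your overall plan is not wrong, but it replaces the two concrete ingredients of the paper (the normal form $\mathbf a$ solving \eqref{e:cohomological}, and the Fourier-mode argument of Section~\ref{s:endproof}) with unconstructed objects and asserted expansions, and the place where you expect difficulty is indeed the whole proof.
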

The function $\widehat{W}_0$ can be identified with a function on $(M,g)$. Hence, we can define its gradient $\nabla_g\widehat{W}_0$ and also $\nabla_g^\perp \widehat{W}_0$ which is the vector field directly orthogonal to $\nabla_gW_0$ (with the same norm). As a corollary of this Theorem, we have
\begin{coro}\label{c:maincoro} Let $\nu$ be a quantum limit for the spectral problem~\eqref{e:eigenvalue-equation}. Then, for every $k\geq 0$ and for every $a\in\ml{C}^1(M)$, one has
 $$\int_{M}\nabla_g^\perp \widehat{W}_0(a)d\left(\Pi_*\nu_{k,\infty}^\pm\right)=0,$$
where $\Pi:(q,p)\in\ml{M}:=SM\mapsto q\in M$ is the canonical projection. In particular,
$$\sum_{k\geq 0}\left(\nu_{k,\infty}^+(S_{q_0}M)+\nu_{k,\infty}^-(S_{q_0}M)\right)>0\quad\Longrightarrow \quad q_0\in\operatorname{Crit}\left(\widehat{W}_0\right).$$ 
\end{coro}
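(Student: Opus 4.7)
The plan is to deduce the corollary from Theorem~\ref{t:maintheo} via a lifting and push-forward argument. Since $\widehat{W}_0$ is by construction an average along the orbits of $\varphi_V^t$, one has $V(\widehat{W}_0)=0$, so $\widehat{W}_0$ descends to a smooth function on $M$ (which I still denote by $\widehat{W}_0$). The key observation is the following identity: for any smooth $f,g:M\to\IR$ one has on $\ml{M}=SM$
\[
X(f\circ\Pi)\,X_\perp(g\circ\Pi)-X_\perp(f\circ\Pi)\,X(g\circ\Pi) = \bigl(\nabla_g^\perp f(g)\bigr)\circ\Pi.
\]
Indeed, at a point $q=(m,v)\in SM$ one has $d\Pi_q(X)=v$ and $d\Pi_q(X_\perp)=v^\perp$ (the rotation of $v$ by $\pi/2$), so the left-hand side equals $(df)_m(v)\,(dg)_m(v^\perp)-(df)_m(v^\perp)\,(dg)_m(v)$. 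Since $(v,v^\perp)$ is an orthonormal basis of $T_mM$, this reduces to the symplectic/area pairing $(df\wedge dg)_m(v,v^\perp)$, which is exactly $\nabla_g^\perp f(g)$ evaluated at $m$; in particular, the dependence on the fiber variable drops out.

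With this identity in hand, the first part of the corollary is immediate: applying Theorem~\ref{t:maintheo} to the $\ml{C}^1(\ml{M})$ test function $a\circ\Pi$, with $f=\widehat{W}_0$ and $g=a$, and then changing variables via $\Pi$ yields
\[
\int_M\nabla_g^\perp\widehat{W}_0(a)\,d\bigl(\Pi_*\nu_{k,\infty}^\pm\bigr)=0
\]
for every $a\in\ml{C}^1(M)$ and every $k\geq 0$.

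For the second assertion, I would read the displayed identity as saying that the finite nonnegative Radon measure $\mu:=\Pi_*\nu_{k,\infty}^\pm$ on the compact surface $M$ has vanishing distributional Lie derivative along the smooth vector field $X_0:=\nabla_g^\perp\widehat{W}_0$; equivalently, $\mu$ is preserved by the globally defined flow $\phi^t$ of $X_0$. If $q_0\notin\operatorname{Crit}(\widehat{W}_0)$, then $X_0(q_0)\ne 0$, so the orbit $\{\phi^t(q_0):t\in\IR\}$ contains infinitely many pairwise distinct points; by $\phi^t$-invariance each of them would carry the same mass $\mu(\{q_0\})$, which combined with $\mu(M)<\infty$ forces $\mu(\{q_0\})=0$. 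Since $\Pi^{-1}(q_0)=S_{q_0}M$, this gives $\nu_{k,\infty}^\pm(S_{q_0}M)=0$ for every $k\geq 0$ and both signs, which is exactly the claimed contrapositive. The only step requiring any calculation is the local identity for the horizontal bracket; the remaining steps are routine measure-theoretic bookkeeping.
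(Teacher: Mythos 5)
Your argument is correct, and it is in fact the natural (and almost certainly intended) deduction; the paper states the corollary as an immediate consequence of Theorem~\ref{t:maintheo} without spelling out the computation. The pointwise identity you use — that for $f,g$ pulled back from $M$ the combination $X(f)X_\perp(g)-X_\perp(f)X(g)$ only depends on the base point and equals $\nabla_g^\perp f(g)$ — is exactly the content one reads off the coordinate expressions~\eqref{e:expressionX}--\eqref{e:expressionXperp}: in conformal coordinates both $X(f)X_\perp(g)-X_\perp(f)X(g)$ and $\nabla_g^\perp f(g)$ equal $\pm e^{-2\lambda}(f_x g_y - f_y g_x)$, and the $z$-dependence drops out via $\cos^2 z + \sin^2 z=1$.

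One very minor caveat: with the paper's sign conventions $d\Pi(X_\perp)$ is the rotation of $d\Pi(X)$ by $-\pi/2$, not $+\pi/2$, so depending on how one reads ``directly orthogonal'' your displayed identity may be off by a global sign. Since you only use it to conclude a vanishing, this is immaterial. The measure-theoretic endgame is also fine: the distributional condition $\int \nabla_g^\perp\widehat{W}_0(a)\,d\mu=0$ for all $a\in\ml{C}^1(M)$, together with smoothness of $\widehat{W}_0$ (so that $X_0=\nabla_g^\perp\widehat{W}_0$ generates a complete smooth flow on the compact surface $M$), gives $(\phi^t)_*\mu=\mu$; if $X_0(q_0)\neq0$ the orbit of $q_0$ contains uncountably many distinct points with equal $\mu$-mass, forcing $\mu(\{q_0\})=0$, i.e.\ $\nu_{k,\infty}^\pm(S_{q_0}M)=0$.
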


These two results can be thought as hypoelliptic analogues of the results that we obtained together with Maci\`a on quantum limits for elliptic Schr\"odinger operators on Zoll manifolds~\cite{MaciaRiviere16, MaciaRiviere19} using the Weinstein averaging method~\cite{Weinstein77}. In that setting, quantum limits are lifted to semiclassical measures that are invariant by the (periodic) geodesic flow on the unit tangent bundle of these Zoll manifolds. We proved in these references that semiclassical measures enjoy some extra invariance properties which allowed us to discard concentration on certain closed geodesics under generic assumptions on the Zoll metric or on the potential. See~\cite{MaciaRiviere18, ArnaizMacia22, Riviere22} for further developments in that direction. 

It would also be natural to consider the case of variable (nonvanishing) sectional curvature $K$. A major difference is that, in this setting, the involved Reeb flow is given by $KV+X(K)X_\perp-X_\perp(K)X$ which is not periodic in general -- see Appendix~\ref{s:variable}. In particular, we cannot expect to apply the averaging method that will be used to prove Theorem~\ref{t:maintheo}. Yet, modulo some extra work, we emphasize that our method should in principle apply to more general sub-Riemannian contact Laplacians in dimension $3$ whose corresponding Reeb vector fields is periodic, e.g. in the flat Heisenberg case discussed in~\cite[\S3]{ColindeVerdiereHillairetTrelat15}. Finally, our method does not allow to deal with the part of the measure $\overline{\nu}_\infty$ even if it would also be interesting to understand its regularity properties.


\subsection{Relation with magnetic Laplacians on $(M,g)$}
Sub-Riemannian contact Laplacians in dimension $3$ are naturally connected with magnetic Laplacians in dimension $2$~\cite[\S3.2]{ColindeVerdiereHillairetTrelat15}. Indeed, such operators are locally modelled on the standard hypoelliptic Laplacian
$$\Delta_H=\left(\partial_x+\frac{y}{2}\partial_z\right)^2+\left(\partial_y-\frac{x}{2}\partial_z\right)^2,$$
acting on the $3$-dimensional Heisenberg group~\cite{RothschildStein76}. Roughly speaking, when $\Delta_H$ is applied to test functions of the form $u(x,y)e^{iBz}$, we recognize the magnetic Laplacian on $\IR^2$ with constant magnetic field $B$. In our setting, the connection with magnetic Laplacians on the surface $M$ is \emph{global} and particularly explicit once one has observed that
there exists a natural decomposition of the space $L^2(\ml{M})$ by letting
\begin{equation}\label{e:fourier-decomposition}
\forall f\in L^2(\ml{M}),\quad f=\sum_{n\in\IZ}\widehat{f}_n,
 \end{equation}
where 
$$\widehat{f}_n(m,v):=\frac{1}{2\pi}\int_0^{2\pi} f\circ\varphi_V^t(m,v)e^{-int}dt.$$
This yields a decomposition $L^2(\ml{M}):=\oplus_{n\in\IZ}L^2_n(\ml{M})$, where each space $L^2_n(\ml{M})$ can be identified with the space of sections $L^2(M,\kappa^{\otimes n})$ for the canonical line bundle $\kappa$ induced by $g$ on $M$ and the anticanonical one $\kappa^{-1}\simeq\kappa^*$~\cite[\S6.1]{PaternainSaloUhlmann22}. \emph{If the potential $W$ is a function on the base} $M$ (i.e. independent of $v$), then the operator $\mathcal{H}_h$ can be restricted to these spaces
$$\mathcal{H}_{h,n}:=\mathcal{H}_h|_{\ml{C}^\infty_n(\ml{M})}=h^2\mathbf{A}^*_{n+1}\mathbf{A}_n-nh^2K+\delta_h^2W:\ml{C}^\infty_n(\ml{M})\rightarrow \ml{C}^\infty_n(\ml{M}),$$
where $\mathcal{C}^\infty_n(\ml{M})\simeq\Gamma^\infty(M,\kappa^{\otimes n})$ is the space of smooth sections of $\kappa^{\otimes n}$ and $$\mathbf{A}_n:=(X+iX_\perp):\mathcal{C}^\infty_n(\ml{M})\rightarrow \mathcal{C}^\infty_{n+1}(\ml{M}).$$ Recall that, for every $n\in\IZ$, one has $\mathbf{A}_{n+1}^*=(-X+iX_\perp):\mathcal{C}^\infty_{n+1}(\ml{M})\rightarrow \mathcal{C}^\infty_n(\ml{M})$. Hence, when considering sequences of solutions to~\eqref{e:eigenvalue-equation} lying in the space $\ml{C}^\infty_n(\ml{M})$, one recovers eigenfunctions of the magnetic (or horizontal) Laplacian $\ml{H}_{h,n}:\Gamma^\infty(M,\kappa^{\otimes n})\rightarrow\Gamma^\infty(M,\kappa^{\otimes n})$ and the limit $|n|\to\infty$ corresponds to the limit of strong magnetic fields of strength $|n|$ that are proportional to the volume form $\Omega:=K\text{Vol}_g$ on $(M,g)$. Equivalently, studying magnetic eigenfunctions amounts to considering joint eigenfunctions of $V$ and $\ml{H}_h$.

When $K\equiv\pm 1$ and $W\equiv 0$, the ``low eigenvalues'' of $\ml{H}_{1,n}$ (as $|n|\to\infty$) are of the form $(2k+1)n+\ml{O}_k(1)$ and have high multiplicity $\asymp_k|n|$, the so-called Landau levels~\cite{Demailly85, GuilleminUribe88} (or~\cite[Th.10.2.2]{FaureTsujii15} for a statement close to ours). See also~\cite{HelfferKordyukov11, RaymondVuNgoc15, Morin19, Charles20, Charles21, KordyukovTaimanov22, Kordyukov22b} for developments on these Landau levels in various geometric settings and~\cite{Morin22} for a recent review regarding magnetic Laplacians on Riemannian manifolds. A natural setting is thus to consider these joint eigenfunctions with $h_n^2|n|=1$ so that the low eigenvalues of $\mathcal{H}_{h_n,n}$ are of size $\asymp1$.
Note also that the perturbation by the potential is of order $1/n\ll\delta_{h_n}^2\ll 1$, and thus asymptotically small (see Appendix~\ref{s:variable} for a discussion on the case $\delta_h=1$). In particular, it results into low eigenvalues clusters that do not overlap in the semiclassical limit. We refer to~\cite{GuilleminUribe86} for results on the distributions of eigenvalues in these clusters. For these low eigenvalues, the corresponding measures $(\nu_{k,\infty}^\pm)_{k\geq 0}$ carry the full mass of $\nu$ as the frequency of oscillation $|n|$ is precisely of order $h_n^{-2}$. Hence, when specified to joint eigenfunctions of $V$ and $\mathcal{H}_h$, our results yield informations on the regularity of low-energy eigenmodes of the semiclassical magnetic Laplacians,
$$
\mathcal{H}_{h_n,n}:=\frac{1}{n}\mathbf{A}_{n+1}^*\mathbf{A}_n+\delta_{h_n}^2W,\quad \frac{1}{n}\ll\delta_{h_n}^2\ll 1,
$$
in the limit $n\to\infty$.

Similarly, with our conventions (and for $W\equiv 0$), the measure $\overline{\nu}_{\infty}$ describes magnetic eigenmodes of $\mathcal{H}_{1,n}$ for eigenvalues $|n|\ll \lambda=h^{-2}\ll |n|^2$ while $\nu_{\text{compact}}$ capture the behaviour of eigenmodes with eigenvalues $\lambda\gtrsim |n|^2$. Hence, on the one hand, the measures $(\nu_{k,\infty}^\pm)_{k\geq 0}$ \emph{do not a priori describe these high energy eigenmodes} of $\ml{H}_{1,n}$ as it is the case in~\cite{GuilleminUribe89, SchraderTaylor89, Zelditch92b}. For instance, in~\cite{Zelditch92b}, Zelditch showed that, when $K=-1$, orthonormal families of eigenfunctions associated with eigenvalues $\lambda \asymp\beta n^2$ (for $\beta$ larger than some critical value $\beta_c>0$) verify a quantum ergodicity property as $|n|\to\infty$. See also~\cite{GuilleminUribe86, GuilleminUribe88, GuilleminUribe89, SchraderTaylor89} for earlier related works in various geometric set-ups. On the other hand, coming back to our hypoelliptic problem~\eqref{e:eigenvalue-equation}, our main Theorem provides slightly more precise informations in the sense that it deals with eigenfunctions of the full operator $\ml{H}_h=\oplus_{n\in\IZ}\ml{H}_{h,n}$ where different semiclassical levels $n\in\IZ$ may be involved and where $W$ may be a function depending on the angular variable\footnote{In that case, $\mathcal{H}_h$ does not induce an operator $\mathcal{H}_{h,n}$ on $\ml{C}^\infty_n(\ml{M})$.} $v\in S_mM$.

\subsection{Organization of the article} We follow the strategy from~\cite{ArnaizRiviere23}. To that aim, we introduce some conventions in Section~\ref{s:chart} and we briefly review the results from this reference that allow to microlocalize the solutions to~\eqref{e:eigenvalue-equation} in the region of phase space where $|V|\asymp h^{-2}$. Then, in Section~\ref{s:normalform}, we slightly refine the normal form procedure from~\cite{ArnaizRiviere23} in view of analyzing the lower order terms in the semiclassical expansion arising from the eigenvalue equation~\eqref{e:eigenvalue-equation}. This refined normal form is then implemented in Section~\ref{s:proof} where the main technical result of the article is proved. Finally, in Section~\ref{s:endproof}, we show how it allows to end the proof of Theorem~\ref{t:maintheo}.

The article also contains three appendices. In Appendix~\ref{s:variable}, we briefly discuss what differs in the variable curvature case and what can already be said using the results from~\cite{ArnaizRiviere23}. Then, in Appendix~\ref{a:compact}, we discuss the properties of the measure $\nu_{\text{comp}}$ (more precisely of its microlocal lift). In particular, we compare more precisely our framework with the one from~\cite{Zelditch92b} on high energy joint eigenfunctions of the Kaluza-Klein Laplacian $X^2+X_\perp^2+V^2$ and $V$ (when $K\equiv -1$). Finally, in Appendix~\ref{a:pdo}, we collect a few standard results from semiclassical analysis that are used all along the article.

\subsection*{Acknowledgements} I address my deepest thanks to V\'ictor Arnaiz for countless discussions on hypoelliptic eigenmodes in Nantes during the years 2022-2023. I also warmly thank Laurent Charles, Yannick Guedes Bonthonneau, Thibault Lefeuvre, L\'eo Morin and San Vu Ngoc for their insights on magnetic Laplacians in dimension $2$.

\section{Semiclassical preliminaries}
 \label{s:chart}
As in~\cite{ArnaizRiviere23}, we will work in a system of isothermal coordinates near a point $m_0$ in $(M,g)$. Namely, we fix a system of local coordinates $(x,y)\in U_0$ near the origin in $\IR^2$ (with $(0,0)$ being the image of $m_0$) such that the metric $g$ writes down in a conformal way $g=e^{2\lambda(x,y)}(dx^2+dy^2).$
\begin{remark}\label{r:compex-structure} If we fix an atlas $(U_j,\psi_j)_{j\in J}$ made of isothermal charts, then one can verify that the induced maps $\psi_{i}\circ\psi_j^{-1}:\IC\rightarrow\IC$ are holomorphic and that they endow $(M,g)$ with a complex structure which is independent of the coordinate charts~\cite[Ch.3]{PaternainSaloUhlmann22}. Recall also that the complex structure depends only on the conformal class of $g$. The (resp. anti) canonical line bundle $\kappa$ (resp. $\kappa^{-1}$) mentionned in the introduction is the line bundle of $(1,0)$ (resp. $(0,1)$)-forms, i.e. proportional to $dx+idy$ (resp. $dx-idy$) in local isothermal coordinates. 
\end{remark}

Without loss of generality we can extend $\lambda$ into a smooth compactly supported function on $\IR^2$. This system of coordinates on $M$ naturally induces a system of coordinates near $S_{m_0}M$ by letting $z$ be the angle between a unit vector $p\in S_qU_0$ and $\frac{\partial}{\partial x}$. With these coordinates at hand, the measure $\mu_L$ writes down
$$d\mu_L(x,y,z)=e^{2\lambda(x,y)}dxdydz,$$
while the vector fields of interest are given by
\begin{equation}\label{e:expressionX}
 X:=e^{-\lambda}\left(\cos z\partial_x+\sin z\partial_y+\left(-\sin z\partial_x\lambda +\cos z\partial_y\lambda\right)\partial_z\right),
\end{equation}
\begin{equation}\label{e:expressionXperp}
 X_\perp:=e^{-\lambda}\left(\sin z\partial_x-\cos z\partial_y+\left(\cos z\partial_x\lambda +\sin z\partial_y\lambda\right)\partial_z\right),
\end{equation}
and 
\begin{equation}\label{e:expressionV}
 V:=\partial_z.
\end{equation}
See~\cite[Ch.~3]{PaternainSaloUhlmann22} for details.
\begin{remark}
As $\lambda$ (and $W$) has been extended into a smooth compactly supported function on $\IR^2\times\IS^1$, we have globally well defined operators $X$, $X_\perp$, $V$ and $\mathcal{H}_h$ on $\IR^2\times \IS^1$. We keep the same notation for simplicity even if they only coincide inside the chart $\mathcal{U}_0:=U_0\times\IS^1$ with our operators.
\end{remark}

\subsection{Semiclassical formulation}

It will be more convenient to work with the standard Lebesgue measure in our local chart. Hence, we define the conjugated operator
$$\widehat{P}_h:=e^{\lambda}\left(-h^2\Delta_{\text{sR}}+\delta_h^2W\right)e^{-\lambda},$$
which has a somewhat simpler expression to deal with. Indeed, one has
$$e^{\lambda}\frac{h}{i}Xe^{-\lambda}=\frac{h}{i}X+ihX(\lambda)=\Op_h^w(H_1),$$
where $\Op_h^w$ is the Weyl quantization on $T^*(\IR^2\times\IS^1)$ (see Appendix~\ref{a:pdo} for a brief reminder) and where $H_1(q,p):=p(X(q))$ is the principal symbol of the operator $-ihX$. Similarly, one has 
$e^{\lambda}\frac{h}{i}X_\perp e^{-\lambda}=\Op_h^w(H_2),$
where $H_2(q,p):=p(X_\perp(q))$ is the principal symbol of the operator $-ihX_\perp$. For the following, we also set $H_3(q,p)=p(V(q))$ to be the principal symbol of $\frac{h}{i}V$. See~\cite[\S3]{ArnaizRiviere23} for more details on these conventions.

Gathering these two observations, one can write 
\begin{equation}\label{e:expression-operator}
 \widehat{P}_h=\Op_h^w\left(H_1^2+H_2^2+\delta_h^2W+h^2W_\lambda\right),
\end{equation}
where $W_\lambda$ is the remainder coming from the terms of order $2$ in the Weyl quantization of our symbols (and depending on our choice of coordinate charts). As we aim at understanding the influence of the lower order terms, we need to compute the remainder term $W_\lambda$ somewhat explicitly. This is the content of the following Lemma:
\begin{lemma}\label{l:expression-remainder} One has
$$W_\lambda=\frac{1}{2}e^{-2\lambda}\left(\partial_x^2\lambda+\partial_y^2\lambda\right).$$
In particular, if the sectional curvature $K$ is equal to $\pm 1$, then, near $0$, $W_\lambda=\mp\frac{1}{2}.$
\end{lemma}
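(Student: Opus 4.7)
The plan is to read off $W_\lambda$ from the Weyl composition formula applied to the two squared factors $\Op_h^w(H_j)^2$ and then verify the coordinate expression by a short direct computation. Starting from $\widehat{P}_h = \Op_h^w(H_1)^2 + \Op_h^w(H_2)^2 + \delta_h^2 W$, the Weyl product rule gives $\Op_h^w(H_j)^2 = \Op_h^w(H_j \#_h H_j)$ with
\[
 H_j \#_h H_j \;=\; \sum_{k \geq 0}\frac{1}{k!}\Big(\frac{ih}{2}\Big)^k \bigl(\partial_p \cdot \partial_{q'} - \partial_q \cdot \partial_{p'}\bigr)^k H_j(q,p)\, H_j(q', p')\Big|_{q'=q,\, p'=p}.
\]
Since each symbol $H_j(q,p) = \sum_i Y_j^i(q) p_i$ (with $Y_1 = X$ and $Y_2 = X_\perp$) is \emph{linear in the fibre variable} $p$, the terms with $k \geq 3$ all vanish (they would require at least two $\partial_p$-derivatives on one of the factors), and the $k = 1$ term equals $\frac{ih}{2}\{H_j, H_j\} = 0$. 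Only the $k = 2$ correction survives, producing
\[
 H_j \#_h H_j \;=\; H_j^2 + \frac{h^2}{4}\sum_{i,k}(\partial_{q_i} Y_j^k)(\partial_{q_k} Y_j^i),
\]
and comparing with~\eqref{e:expression-operator} I would identify
\[
 W_\lambda \;=\; \frac{1}{4}\sum_{i,k}\Bigl[(\partial_{q_i} X^k)(\partial_{q_k} X^i) + (\partial_{q_i} X_\perp^k)(\partial_{q_k} X_\perp^i)\Bigr].
\]

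Next I would plug in the coordinate formulas~\eqref{e:expressionX} and \eqref{e:expressionXperp} and expand. Each of the nine partial derivatives of $X$ (and of $X_\perp$) factors as $e^{-\lambda}$ times a trigonometric polynomial of degree at most one in $z$ whose coefficients involve the first and second $(x,y)$-derivatives of $\lambda$. The resulting sums for $X$ and for $X_\perp$ each produce terms proportional to $\cos(2z), \sin(2z), \cos^2 z, \sin^2 z, \sin z\cos z$. The main observation --- and the only computational step requiring some bookkeeping --- is that the relation $X_\perp(x,y,z) = X(x,y,z - \pi/2)$ forces every second harmonic in $z$ to contribute with opposite sign in the $X$- and the $X_\perp$-pieces, so that these oscillatory contributions cancel upon summation. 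What remains is $z$-invariant and, via $\cos^2 z + \sin^2 z = 1$, collapses to
\[
 W_\lambda \;=\; \frac{1}{2}\, e^{-2\lambda}\bigl(\partial_x^2 \lambda + \partial_y^2 \lambda\bigr).
\]

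For the last assertion, the classical formula for the Gauss curvature of a conformal metric $g = e^{2\lambda}(dx^2 + dy^2)$ in isothermal coordinates reads $K = -e^{-2\lambda}(\partial_x^2 + \partial_y^2)\lambda$, so under the hypothesis $K = \pm 1$ one obtains $W_\lambda = -K/2 = \mp \tfrac{1}{2}$ near the origin, as claimed. The whole argument is thus a routine application of Weyl calculus once one exploits the built-in symmetry relating $X$ and $X_\perp$.
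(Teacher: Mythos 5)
Correct, and essentially the paper's argument: extract the $h^2$-term from the Weyl composition rule, which reduces to the single $k=2$ cross term because the $H_j$ are linear in $p$, and evaluate it in coordinates. Two small remarks: your identification $W_\lambda=\tfrac14\sum_{i,k}\bigl[(\partial_{q_i}X^k)(\partial_{q_k}X^i)+(\partial_{q_i}X_\perp^k)(\partial_{q_k}X_\perp^i)\bigr]$ (a trace of the squared Jacobian of the vector fields) is the right one, whereas the formula displayed in the paper's proof reads literally as a sum of squares $\sum(\partial_p^\beta\partial_q^\alpha H_j)^2$ and would not give the stated $W_\lambda$, so it is presumably a typo; and your observation $X_\perp(x,y,z)=X(x,y,z-\pi/2)$, which forces all second $z$-harmonics to cancel between the $X$- and $X_\perp$-pieces, is a genuinely tidier way to organize the ``somehow tedious direct calculation'' the paper merely alludes to.
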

\begin{proof} The proof is just an explicit calculation using the composition formula~\eqref{e:composition-formula} for the Weyl quantization. Namely, as $H_1(q,p)$ and $H_2(q,p)$ are polynomials of degree $1$, this extra term is given by
$$\frac{1}{4}\sum_{|\alpha|=|\beta|=1}\left(\partial_p^\beta\partial_q^\alpha H_1\partial_p^\beta\partial_q^\alpha H_1+\partial_p^\beta\partial_q^\alpha H_2\partial_p^\beta\partial_q^\alpha H_2\right).$$
Then, a (somehow tedious) direct calculation based on the exact expressions~\eqref{e:expressionX} and~\eqref{e:expressionXperp} gives the expected result. Note that the expression for $W_\lambda$ is independent of the fact that the curvature is constant. 
\end{proof}

Coming back to our problem and given a solution $\psi_h$ to the eigenvalue problem~\eqref{e:eigenvalue-equation}, we set 
\begin{equation}\label{e:conjugated-eigenfunction}
 u_h:=e^{\lambda}\psi_h
\end{equation}
so that it solves locally in $\mathcal{U}_0$ the eigenvalue equation
\begin{equation}\label{e:semiclassical-eigenvalue}
 \widehat{P}_h u_h=E_hu_h.
\end{equation}
Thanks to the Rothschild-Stein Theorem~\cite{RothschildStein76}, one can verify that, for any compact subset $\mathcal{K}$ of $\mathcal{U}_0$,
\begin{equation}\label{e:apriori-semiclassical}
\left\|\Op_h^w(H_1)u_h\right\|_{L^2(\ml{K})}+\left\|\Op_h^w(H_2)u_h\right\|_{L^2(\ml{K})}+\left\|\Op_h^w(hH_3)u_h\right\|_{L^2(\ml{K})}\leq C_{\mathcal{K}},
\end{equation}
where $C_\mathcal{K}>0$ is independent of $(\psi_h,h)$ solving the eigenvalue problem and where the $L^2$ norm is taken with respect to the standard Lebesgue measure on $\IR^2\times\IS^1$. See~\cite[Lemma~A.5]{ArnaizRiviere23} for more details. 

\subsection{Microlocalization at infinity}

We now recall how one can obtain the decomposition~\eqref{e:decomposition-measure} of a quantum limit $\nu$ by working in the above local coordinate chart (it is sufficient by a partition of unity argument).

We fix for the rest of the article a smooth function $\chi:\IR\rightarrow [0,1]$ which is equal to $1$ on $[-1,1]$ and to $0$ outside $[-2,2]$. Moreover, we make the assumption that $\chi'\geq 0$ on $\IR_-$ and $\chi'\leq 0$ on $\IR_+$. For such a function, we also set
$\tilde{\chi}=1-\chi.$ 

\subsubsection{Reduction to the region $1\ll |H_3|\lesssim h^{-1}$}

First, we recall from~\cite[Lemma 3.3]{ArnaizRiviere23} that, for every $0<\varepsilon<1$ and for every $R>1$, the functions
\begin{equation}\label{e:cutoffball}
 \chi_R^B:=\chi\left(\frac{H_1^2+H_2^2+H_3^2}{R}\right),\quad\tilde{\chi}_R^B:=1-\chi_R^B,
\end{equation}
and
\begin{equation}\label{e:cutoffcone}
 \chi_\varepsilon^C:=\chi\left(\frac{\varepsilon H_3}{\sqrt{1+H_1^2+H_2^2}}\right),\quad\tilde{\chi}_\varepsilon^C:=1-\chi_\varepsilon^C
\end{equation}
belong\footnote{Strictly speaking, the proof in this reference deals with the case where the role of $H_1$ and $H_3$ are intertwined but this does not affect the argument there.} to the class of symbols $S^0_{\text{cl}}(T^*(\IR^2\times\IS^1))$ amenable to pseudodifferential calculus (whose definition is recalled in Appendix~\ref{a:pdo}). Similarly, thanks to the proof of~\cite[Lemma~3.8]{ArnaizRiviere23} and letting $b\in\ml{C}^{\infty}_c(\ml{U}_0\times\IR)$, the function
$b(x,y,z,hH_3)\tilde{\chi}_\varepsilon^C\tilde{\chi}_R^B$ belongs to that same class of symbols (with seminorms that are uniformly bounded in terms of $0<h\leq h_0$).

The main object of study in~\cite{ArnaizRiviere23} was the sequences of distributions:
\begin{equation}\label{e:wigner}
 \mu_h^{R,\varepsilon}:b\in\ml{C}^{\infty}_c(\ml{U}_0\times\IR)\mapsto \langle\Op_h^w\left(b(x,y,z,hH_3)\tilde{\chi}_\varepsilon^C\tilde{\chi}_R^B\right)u_h,u_h\rangle_{L^2},
\end{equation}
where the scalar product is taken with respect to the Lebesgue measure in the chart and where $u_h$ is locally defined by~\eqref{e:conjugated-eigenfunction}. These distributions exactly capture the part of $u_h$ oscillating at frequencies $h^{-1}\ll\omega\lesssim h^{-2}$.

For latter purpose, we also record the following useful upper bound that was obtained, for every $k\geq 0$, in the proof of Lemma 4.9 from this reference:
\begin{multline}\label{e:aprioribound-subtle}\left\|\Op_h^w\left(b(x,y,z,hH_3)\tilde{\chi}_\varepsilon^C\tilde{\chi}_R^B\right)A_h^ku_h\right\|_{L^2}\\+\left\|\Op_h^w\left(b(x,y,z,hH_3)\tilde{\chi}_\varepsilon^C\tilde{\chi}_R^B\right)(A_h^*)^ku_h\right\|_{L^2}\leq C_{R,\varepsilon, k,b},
\end{multline}
where $C_{R,\varepsilon, k,b}>0$ is some positive constant that is independent of $h$ (it only depends on the various parameters appearing in the index) and where 
\begin{equation}\label{e:def-Ah}
 A_h:=\Op_h^w(H_1+iH_2).
\end{equation}

\subsubsection{Reminder on the support properties of the limit measure}\label{ss:limit-measure}

Recall now from~\cite[\S 3.4]{ArnaizRiviere23} that, up to successive extractions, we can suppose that there exists a finite (nonnegative) Radon measure $\mu_\infty$ on $\ml{U}_0\times\IR$ such that, for every $b\in\ml{C}^{\infty}_c(\ml{U}_0\times\IR)$,
$$\lim_{\varepsilon\to 0}\lim_{R\to\infty}\lim_{h\to 0}\langle\mu_h^{R,\varepsilon},b\rangle=\int_{\ml{U}_0\times\IR}bd\mu_\infty.$$
Moreover, according to Proposition~4.1 in that same reference (that can be adapted verbatim to our case), the measure $\mu_\infty$ can be decomposed as
\begin{equation}\label{e:decomposition-mu-infty}
\mu_\infty=\overline{\mu}_\infty+\sum_{k=0}^{\infty}\left(\mu_{k,\infty}^++\mu_{k,\infty}^-\right),
\end{equation}
where $\overline{\mu}_\infty$ and $(\mu_{k,\infty}^\pm)_{k\geq 0}$ are nonnegative finite Radon measure, where $\overline{\mu}_\infty$ is supported inside $\ml{U}_0\times\{0\}$ and where $\mu_{k,\infty}^\pm$ is supported in the set
\begin{equation}\label{e:support-measure}
\ml{V}_k:=\left\{(q,E)\in\ml{U}_0\times\IR^*:E=\pm\frac{E_0}{2k+1}\right\}.
 \end{equation}
Finally, recall from~\cite[\S7]{ArnaizRiviere23} that the relation with the measures appearing in~\eqref{e:decomposition-measure} is as follows:
\begin{equation}\label{e:relation-mu-nu-0}
\overline{\mu}_\infty(q,E)=\overline{\nu}_\infty(q)\otimes\delta_0(E),
 \end{equation}
and, for every $k\geq 0$,
\begin{equation}\label{e:relation-mu-nu-k}
\mu_{k,\infty}^\pm(q,E)=\nu_{k,\infty}^\pm(q)\otimes\delta_0\left(E\mp\frac{E_0}{2k+1}\right).
\end{equation}

\subsection{The measure $\nu_{\text{comp}}$}\label{ss:compact} According to~\cite[\S 3.1]{ArnaizRiviere23}, the measure $\nu_{\text{comp}}$ is obtained as the limit as $h\to 0^+$ and $R\to \infty$ (in this order) of the distribution
$$\nu_{h,R}:a\in\ml{C}^\infty_c(\ml{U}_0)\mapsto\left\langle\Op_h^w(a\chi_R^B)u_h,u_h\right\rangle_{L^2}\in\IC.$$
Up to another extraction, we can suppose that the following sequence of distributions,
$$w_h:a\in\ml{C}^\infty_c(T^*\ml{U}_0)\mapsto\left\langle\Op_h^w(a)u_h,u_h\right\rangle_{L^2}\in\IC,$$
converges as $h\to 0^+$ to some limit distribution $w\in\ml{D}'(T^*\ml{U}_0)$ (and by partition of unity on $T^*\ml{M}$). Following~\cite[Ch.~5]{Zworski12}, $w$ is a finite nonnegative measure carried by the \emph{noncompact} set $\{H_1^2+H_2^2=E_0\}$ and invariant by the Hamiltonian flow of $H_1^2+H_2^2$. Moreover, $\nu_{\text{comp}}=\tilde{\Pi}_*(w)$, where $\tilde{\Pi}:T^*\ml{M}\rightarrow \ml{M}$ is the canonical projection.  We refer to Appendix~\ref{a:compact} for more details on the nature of the (magnetic type) flow generated by this subelliptic Hamiltonian.

When $W$ is the pullback of a function on $M$ (i.e. independent of the $z$-variable), it is also interesting to consider joint eigenfunctions of $\mathcal{H}_h$ and $hV$ in view of the relation with magnetic Laplacians. More precisely, one can consider solutions to~\eqref{e:eigenvalue-equation} that also verify
\begin{equation}\label{e:magnetic-eigenvalues}
 hV\psi_h=B_h\psi_h,\quad B_h\rightarrow B\in\IR\cup\{\pm\infty\}\quad\text{as}\quad h\rightarrow 0^+. 
\end{equation}
Recall that, from the Rothschild-Stein Theorem, $B_h=\ml{O}(h^{-1})$. When $B\in\IR$, one can verify that $w$ is a probability measure, i.e. $\nu_{\text{comp}}(\ml{M})=1$, and that it is in addition invariant under the Hamiltonian flow generated by $H_3$ and carried by the compact set $\{H_1^2+H_2^2=E_0\}\cap \{H_3=B\}$. On the opposite case where $B=\pm \infty$, one has $\nu_{\text{comp}}(\ml{M})=0$ and, depending on the rate of convergence of $B_h$ to infinity, the measure $\overline{\nu}_\infty$ carries the full mass (when $B_h\ll h^{-1}$) or one of the measure $\nu_{k,\infty}^\pm$ carries it (when $B_h\asymp h^{-1}$). 

\section{Normal form procedure}\label{s:normalform}

The proof of the invariance properties of $\mu_\infty$ in~\cite{ArnaizRiviere23} relied on a normal form procedure adapted to the geometry of the problem and inspired from~\cite{ColindeVerdiereHillairetTrelat15}. The fact that the operators are slightly different than in~\cite{ArnaizRiviere23} will modify the terms appearing in the normal form. We do not discuss all the details and we just focus on the main differences, refering to~\cite[\S5]{ArnaizRiviere23} for more explanations on this procedure and references. The main point compared with~\cite{ArnaizRiviere23} is that we need to keep track of more terms in view of analyzing the influence of the potential $W$.

The strategy is to replace a test function $a(x,y,z)$ by a function $\mathbf{a}(x,y,z,\xi,\eta,\zeta)$ whose Poisson bracket with $H_1^2+H_2^2$ is as small as possible in the regime $|H_1|+|H_2|\ll|H_3|$. To do this, we set $Z=H_1+iKH_2$, where we recall that the sectional curvature $K$ is constant equal to $\pm 1$. Thanks to~\eqref{e:commutators}, one has
\begin{equation}\label{e:commutatorZ}
\{Z,\overline{Z}\}=2i H_3,
\end{equation}
from which we deduce the key observation in view of performing our normal form procedure:
\begin{equation}\label{e:cohomological}
\left\{|Z|^2,\frac{Z^k\overline{Z}^l}{2i(l-k)}\right\}=H_3Z^k\overline{Z}^l.
 \end{equation}

\begin{remark}\label{r:commutatorH3} Note that a key simplification compared with~\cite{ArnaizRiviere23} is that, thanks to~\eqref{e:commutators},
$$\{H_1^2+H_2^3,H_3\}=2H_1\{H_1,H_3\}+2H_2\{H_2,H_3\}=0.$$
In particular, $H_3$ is already in normal form with respect to $H_1^2+H_2^2$.
\end{remark}
 The first step of the normal form procedure consists in observing that
 $$\{|Z|^2,a\}=Z\{\overline{Z},a\}+\overline{Z}\{Z,a\},$$
 and, in view of~\eqref{e:commutatorZ}, in letting 
 $$\tilde{a}_1=\frac{Z}{2iH_3}\{\overline{Z},a\}-\frac{\overline{Z}}{2iH_3}\{Z,a\}.$$
 Hence, if we set $a_1=a+\tilde{a}_1$, we get
 $$\{|Z|^2,a_1\}=\frac{|Z|^2}{H_3}V(a)+\frac{Z^2}{2iH_3}X_{\overline{Z}}^2(a)-\frac{\overline{Z}^2}{2iH_3}X_Z^2(a),$$
 where $X_Z=X+iKX_\perp.$ We would now like to eliminate the terms involving $Z^2$ and $\overline{Z}^2$ so that we define 
 $$\tilde{a}_2:=-\frac{Z^2}{8H_3^2}X_{\overline{Z}}^2(a)-\frac{\overline{Z}^2}{8H_3^2}X_Z^2(a),$$
and $a_2:=a_1+\tilde{a}_2$. It yields the following simplification:
$$
\{|Z|^2,a_2\}=\frac{|Z|^2}{H_3}V(a)-\frac{Z^3}{8H_3^2}X_{\overline{Z}}^3(a)-\frac{\overline{Z}^3}{8H_3^2}X_Z^3(a)
-\frac{Z^2\overline{Z}}{8H_3^2}X_ZX_{\overline{Z}}^2(a)-\frac{\overline{Z}^2Z}{8H_3^2}X_{\overline{Z}}X_Z^2(a).
$$
We can iterate this procedure and pick $\tilde{a}_3$ and $\tilde{a}_4$ that are of the form
$$\tilde{a}_j=\frac{1}{H_3^j}\sum_{|\alpha+\beta|=j}Z^\alpha\overline{Z}^\beta \mathcal{L}_{\alpha,\beta}(a),\quad j=3,4,$$
with $\mathcal{L}_{\alpha,\beta}$ being differential operators of order $\leq j$, and such that, if we set $\mathbf{a}=a+\sum_{j=1}^4\tilde{a}_j$, one obtains
\begin{equation}\label{e:commutator-a}
\{|Z|^2,\mathbf{a}\}=\frac{|Z|^2}{H_3}V(a)+\frac{|Z|^4}{16iH_3^3}\left(X_{\overline{Z}}^2X_Z^2-X_Z^2X_{\overline{Z}}^2\right)(a)+\frac{1}{H_3^4}\sum_{|\alpha+\beta|=5}Z^{\alpha}\overline{Z}^\beta \tilde{\mathcal{L}}_{\alpha,\beta}(a),
\end{equation}
with $\tilde{\mathcal{L}}_{\alpha,\beta}$ being differential operators of order $\leq 5$. Note that the involved differential operators depend only on the coordinate chart (but not on $a$). Using the commutation properties~\eqref{e:commutators} of our operators, this can be rewritten as
\begin{equation}\label{e:commutator-a-real}
\{|Z|^2,\mathbf{a}\}=\frac{H_1^2+H_2^2}{H_3}V(a)-\frac{(H_1^2+H_2^2)^2}{2H_3^3}\Delta_{\text{sR}}V(a)+\frac{1}{H_3^4}\sum_{|\alpha+\beta|=5}H_1^{\alpha}H_2^\beta \tilde{\mathcal{R}}_{\alpha,\beta}(a),
\end{equation}
where $\tilde{\mathcal{R}}_{\alpha,\beta}$ are differential operators of order $\leq 5$. Finally, we record the expression of $\mathbf{a}$:
\begin{equation}\label{e:normal-form-a}
 \mathbf{a}=a+\frac{H_2}{H_3}X(a)-\frac{H_1}{H_3}X_\perp(a)+\sum_{j=2}^4\frac{1}{H_3^j}\sum_{|\alpha+\beta|=j}H_1^{\alpha}H_2^\beta \mathcal{R}_{\alpha,\beta}(a),
\end{equation}
where $\mathcal{R}_{\alpha,\beta}$ are differential operators of order $\leq |\alpha+\beta|$.
\begin{remark} Thanks to~\cite[Cor.~2.6]{ArnaizRiviere23}, the symbol $\mathbf{a}$ belongs to the class of symbols $S^0_{\text{cl}}(T^*(\IR^2\times\IS^1))$ amenable to pseudodifferential calculus inside the support of $\tilde{\chi}_{\varepsilon}^C\tilde{\chi}_R^B.$ 
\end{remark}

\section{Invariance of the measure at infinity}\label{s:proof}
The key property for our analysis is the following extra-invariance property of our limit measures:
\begin{prop}\label{p:mainprop} 
With the above conventions, for every $a\in\ml{C}^1_c(\ml{U}_0)$ verifying $V(a)=0$ and for every $k\in\IZ_+$, one has
 $$\nu_{k,\infty}^\pm (X(W)X_\perp(a)-X_\perp(W)X(a))=0.$$
\end{prop}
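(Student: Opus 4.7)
The strategy is to test the eigenvalue equation against a pseudodifferential operator built from the normal form of Section~\ref{s:normalform} and to extract the subleading $\delta_h^2$-contribution coming from $W$. Fix $a\in\ml{C}^1_c(\ml{U}_0)$ with $V(a)=0$ and a smooth cutoff $\phi\in\ml{C}^\infty_c(\IR^*)$ equal to $1$ near $\pm E_0/(2k+1)$ and supported in a neighborhood avoiding $0$ and the other Landau values $\pm E_0/(2k'+1)$, $k'\neq k$. Let $\mathbf{a}$ denote the normal form~\eqref{e:normal-form-a} of $a$ and set $\Theta_h:=\phi(hH_3)\tilde{\chi}_\varepsilon^C\tilde{\chi}_R^B$. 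By Remark~\ref{r:commutatorH3} and since the cutoffs depend only on $H_3$ and $H_1^2+H_2^2$, one has $\{|Z|^2,\Theta_h\}=0$, hence $\{|Z|^2,\mathbf{a}\Theta_h\}=\Theta_h\{|Z|^2,\mathbf{a}\}$; the hypothesis $V(a)=0$ collapses~\eqref{e:commutator-a-real} to its fifth-order remainder.

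From the eigenvalue equation and standard Weyl calculus,
$$0 = \frac{i}{h}\big\langle[\widehat{P}_h,\Op_h^w(\mathbf{a}\Theta_h)]u_h,u_h\big\rangle = \big\langle\Op_h^w\!\big(\{H_1^2{+}H_2^2{+}\delta_h^2W{+}h^2W_\lambda,\mathbf{a}\Theta_h\}\big)u_h,u_h\big\rangle + \ml{R}_h,$$
where $\ml{R}_h$ is the third-order Moyal remainder. A direct computation from~\eqref{e:normal-form-a}, using that $W=W(q)$, yields
$$\{W,\mathbf{a}\}=\frac{\kappa}{H_3}\big(X(W)X_\perp(a)-X_\perp(W)X(a)\big)+\frac{V(W)}{H_3^2}\big(H_2X(a)-H_1X_\perp(a)\big)+\ml{O}(\varepsilon h),$$
for some sign $\kappa=\pm 1$, the $\ml{O}(\varepsilon h)$-remainder collecting the contributions of $\tilde a_2,\tilde a_3,\tilde a_4$ in~\eqref{e:normal-form-a} and using $|H_1|,|H_2|\leq\varepsilon|H_3|/2$ on $\supp\tilde{\chi}_\varepsilon^C$ together with $1/|H_3|\asymp h$. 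After multiplication by $\delta_h^2$, the first summand produces the desired bracket divided by $H_3$, a contribution of size $h\delta_h^2$.

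The remaining task is a careful book-keeping of errors: (i) $\Theta_h\{|Z|^2,\mathbf{a}\}$ pairs to $O(h^4)$, via the a priori bound~\eqref{e:aprioribound-subtle} to absorb the five factors $H_1^\alpha H_2^\beta$ as iterated applications of $A_h$ to $u_h$ (modulo commutators producing extra $h$-factors), combined with the operator norm $O(h^4)$ of $\Op_h^w(1/H_3^4\cdot\text{cutoffs})$; (ii) the $V(W)$- and $\ml{O}(\varepsilon h)$-terms pair to $O(\varepsilon h\delta_h^2)$; (iii) the $h^2\{W_\lambda,\mathbf{a}\Theta_h\}$-term is $O(h^3)$; (iv) Poisson brackets involving $\phi(hH_3)$, $\tilde{\chi}_\varepsilon^C$, $\tilde{\chi}_R^B$ are either supported away from the asymptotic support of the Wigner distribution of $u_h$ (they are killed in the successive limits since the support of $\phi'$ misses all Landau levels while $\tilde{\chi}_R^B$ becomes constantly $1$ on the relevant region) or produce negligible factors; (v) the Moyal remainder $\ml{R}_h$ is $o(h\delta_h^2)$ by a term-by-term estimate, using the quadratic-in-momenta character of $|Z|^2$ and the fact that $\partial_\xi$-derivatives of $\mathbf{a}\Theta_h$ gain factors $1/H_3\asymp h$. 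Dividing by $h\delta_h^2$ and invoking~\eqref{e:strength-perturbation} (so that $h/\delta_h\to 0$ implies $h^k/\delta_h^2\to 0$ for $k\geq 2$), all of (i)--(v) are $o(1)$. Passing to the limits $h\to 0$, $R\to\infty$, $\varepsilon\to 0$ and using~\eqref{e:decomposition-mu-infty}--\eqref{e:relation-mu-nu-k} together with the localization of $\phi$ on the $k$-th Landau level isolates $\mu_{k,\infty}^\pm$ and delivers the stated identity. The main technical obstacle is~(i): controlling the fifth-order normal-form remainder sharply enough that its pairing with $u_h$ is genuinely $o(h\delta_h^2)$, which requires iterating~\eqref{e:aprioribound-subtle} through $A_h^5$ and is precisely what dictates the order-$4$ truncation in Section~\ref{s:normalform} and the restriction $h\ll\delta_h$ from~\eqref{e:strength-perturbation}.
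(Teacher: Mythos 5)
Your proof follows essentially the same normal-form commutator strategy as the paper, with one small but meaningful structural difference: the paper uses the test symbol $\mathbf{a}\,H_3\,\theta_{\pm k}(hH_3)\tilde\chi_R^B\tilde\chi_\varepsilon^C$ (which lies in $S^1_{\rm cl}$), whereas you use $\mathbf{a}\,\phi(hH_3)\tilde\chi_\varepsilon^C\tilde\chi_R^B$ without the $H_3$ weight, which lies in $S^0_{\rm cl}$. This variant is legitimate and in fact slightly streamlines two things: (a) the third-order Moyal symbol is automatically $\ml{O}(h)$ on the relevant support (every $\partial_p$ hitting $\mathbf{a}\Theta_h$ gains a factor $\asymp h$), rather than merely $\ml{O}(1)$ as in the paper where one $\partial_p$ can absorb the $H_3$; and (b) because $\{W,a\}=0$ for a base function $a$, the contribution $aV(W)$ never appears, so one does not need to invoke the already-established $V$-invariance of $\nu_{k,\infty}^\pm$ at the very end, as the paper does. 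After dividing by $h\delta_h^2$, both variants land on the same constraint $\delta_h\gg h$, so nothing is gained or lost at the level of hypotheses. The estimate $\{W,\mathbf{a}\}=\frac{1}{H_3}\bigl(X(W)X_\perp(a)-X_\perp(W)X(a)\bigr)+\frac{V(W)}{H_3^2}(H_2X(a)-H_1X_\perp(a))+\ml{O}(\varepsilon h)$ is correct (with $\kappa=1$ in your notation, accounting for the conventions on $Z=H_1+iKH_2$), and your error book-keeping (i)--(v) is sound.

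There is, however, one genuine misattribution in your closing paragraph: you claim the main technical obstacle, and the source of the restriction $h\ll\delta_h$, is the fifth-order normal-form remainder (i). That is not correct. Your own estimates show (i) pairs to $\ml{O}(h^4)$, so after dividing by $h\delta_h^2$ it is $\ml{O}(h^3/\delta_h^2)$, which would only require $\delta_h\gg h^{3/2}$. The actually binding term is (v), the third-order Moyal remainder: it is $\ml{O}(h^3)$, and dividing by $h\delta_h^2$ gives $\ml{O}(h^2/\delta_h^2)$, which is $o(1)$ precisely when $\delta_h\gg h$. The paper states this explicitly in the remark following equation~\eqref{e:main-technical-estimate} and analyzes it further in the subsection on the case $\delta_h=h$. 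Likewise, the order-$4$ truncation in Section~\ref{s:normalform} is forced in the paper's version by the extra $H_3$ weight (which costs a power of $h^{-1}$), not primarily by the a~priori iteration you describe; with your $S^0$ test symbol a truncation at one order lower would already match the bottleneck set by (v).
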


\subsection{Proof of Proposition~\ref{p:mainprop}}
The proof follows the lines of~\cite[\S6]{ArnaizRiviere23} and it was itself inspired by the microlocal proof initiated in~\cite{ColindeVerdiereHillairetTrelat15}. See also~\cite{BurqSun22, ArnaizSun22} for related arguments in the case of Baouendi-Grushin operators. The main input compared with~\cite{ArnaizRiviere23} is that we analyze the lower order terms in the asymptotic expansion.

We let $a$ be a smooth \emph{real-valued} function that is compactly supported inside $\ml{U}_0$ and, for every 
$k\in\IZ_+$, we let $\theta_k(E)$ be an element in $\ml{C}^\infty_c(\IR)$ which is compactly supported inside the interval $\left(\frac{E_0}{2k+2},\frac{E_0}{2k}\right)$ and which is equal to $1$ in a small neighborhood of $\frac{E_0}{2k+1}$. We set $\theta_{\pm k}\left(E\right):=\theta_k(\pm E).$ Regarding~\eqref{e:relation-mu-nu-k} and in view of analyzing $\nu_{k,\infty}^\pm$, the strategy from~\cite{ArnaizRiviere23} consists in picking the test function
$$b_h(x,y,z,\xi,\eta,\zeta):=H_3\theta_{\pm k}(hH_3)\left(\tilde{\chi}_R^B\tilde{\chi}_{\varepsilon}^C\mathbf{a}\right)(x,y,z,\xi,\eta,\zeta),$$
where $\mathbf{a}$ is constructed from $a$ using~\eqref{e:normal-form-a}. We note, from the definitions~\eqref{e:cutoffball} and~\eqref{e:cutoffcone} of our cutoff functions and from Remark~\ref{r:commutatorH3}, that 
\begin{equation}\label{e:trivial-commutator}
\{H_1^2+H_2^2,\tilde{\chi}_R^B\tilde{\chi}_{\varepsilon}^C\theta_{\pm k}(hH_3)H_3\}=0.
\end{equation}
 Now using the eigenvalue equation~\eqref{e:semiclassical-eigenvalue} under the form 
 \begin{equation}\label{e:bracket-eigenvalue}
  \left\langle\left[\Op_h^w(H_1^2+H_2^2+\delta_h^2W),\Op_h^w\left(\mathbf{a}\tilde{\chi}_R^B\tilde{\chi}_{\varepsilon}^C\theta_{\pm k}(hH_3)H_3\right)\right]u_h,u_h\right\rangle=\ml{O}(h^\infty),
 \end{equation}
together with the composition rule~\eqref{e:composition-formula} for the Weyl quantization applied up to $\ml{O}(h^3)$ remainders, we get
 $$
 \left\langle\Op_h^w\left(\{H_1^2+H_2^2,\mathbf{a}\}\tilde{\chi}_R^B\tilde{\chi}_{\varepsilon}^C\theta_{\pm k}(hH_3)H_3\right)u_h,u_h\right\rangle=\ml{O}_{R,\varepsilon}(h^2)+\ml{O}_{R,\varepsilon}(\delta_h^2).
 $$
Here we used the fact that the symbol involved in our test function lies in the class $S^1_{\text{cl}}(T^*(\IR^2\times\IS^1))$. Once this is written, the proof in~\cite{ArnaizRiviere23} is achieved by using the normal form equation~\eqref{e:commutator-a-real} together with the a priori estimates~\eqref{e:apriori-semiclassical}. This yields
$$\left\langle\Op_h^w\left(\left(V(a)(H_1^2+H_2^2)\right)\tilde{\chi}_R^B\tilde{\chi}_{\varepsilon}^C\theta_{\pm k}(hH_3)\right)u_h,u_h\right\rangle=\ml{O}(\varepsilon)+\ml{O}_{R,\varepsilon}(\delta_h^2)+\ml{O}_{R,\varepsilon}(h^2).$$
Recalling~\eqref{e:semiclassical-eigenvalue} and~\eqref{e:apriori-semiclassical} and letting $h\to 0^+$, $R\to \infty$ and $\varepsilon\to 0$ in this order, one finds, using~\eqref{e:decomposition-mu-infty} and~\eqref{e:relation-mu-nu-k}, that $\nu_{k,\infty}^\pm(V(a))=0$ (as $E_0>0$). 

We now want to show our extra invariance properties. To that aim, we pick a test function $a$ in $\ml{C}^\infty_c(\ml{U}_0)$ verifying $V(a)=0$ (equivalently that does not depend on the $z$-variable) and we revisit the above argument more carefully.

\begin{remark} The choice of the local chart is quite important here as it allows to pick symbols $a$ that are invariant by the flow $\varphi_V^t$ and that are supported inside the chart. By a partition of unity argument, any $V$-invariant test function can be treated through this local procedure thanks to our choice of coordinate chart. 
\end{remark}

Thanks to~\eqref{e:commutator-a-real} and to~\eqref{e:normal-form-a}, it follows from~\eqref{e:bracket-eigenvalue} together with the composition rule~\eqref{e:composition-formula} for $\Op_h^w$ applied up to $\ml{O}(h^5)$ remainders that, up to $\ml{O}(h^2)$ remainders, we are left with analyzing the four following terms:
\begin{enumerate}
 \item the remainder coming from the normal form:
 \begin{equation}\label{e:remainder-normal-form}
 h\sum_{|\alpha+\beta|=5}\left\langle\Op_h^w\left(\tilde{\chi}_R^B\tilde{\chi}_{\varepsilon}^C\theta_{\pm k}(hH_3)\frac{Z^{\alpha}\overline{Z}^\beta}{(hH_3)^3} \tilde{\mathcal{R}}_{\alpha,\beta}(a)\right)u_h,u_h\right\rangle.
 \end{equation}
 \item the term coming from the potential and from the first terms in the normal form of $a$:
 \begin{equation}\label{e:contribution-potential}
  \frac{\delta_h^2}{h^2}\left\langle\Op_h^w\left(\left\{W,\tilde{\chi}_R^B\tilde{\chi}_{\varepsilon}^C\theta_{\pm k}(hH_3)\left(aH_3+H_2X(a)-H_1X_\perp(a)\right)\right\}\right)u_h,u_h\right\rangle
 \end{equation}
 \item the remainding terms coming from the potential and the normal form of $a$:
 \begin{equation}\label{e:contribution-potential2}
  \frac{\delta_h^2}{h^2}\sum_{j=2}^4h^{j-1}\sum_{|\alpha+\beta|=j}\left\langle\Op_h^w\left(\left\{W,\tilde{\chi}_R^B\tilde{\chi}_{\varepsilon}^C\theta_{\pm k}(hH_3)\frac{Z^{\alpha}\overline{Z}^\beta}{(hH_3)^{j-1}} \mathcal{L}_{\alpha,\beta}(a)\right\}\right)u_h,u_h\right\rangle
 \end{equation}
 \item the term coming from the term of order $h^3$ in the composition rule~\eqref{e:composition-formula}:
 \begin{equation}\label{e:remainder-weyl}
  \left\langle\Op_h^w\left(\tilde{A}(D)^3\left(\left(H_1^2+H_2^2\right)(q_1,p_1)\left(\tilde{\chi}_R^B\tilde{\chi}_{\varepsilon}^C\theta_{\pm k}(hH_3)H_3\mathbf{a}\right)(q_2,p_2)\right)\right)u_h,u_h\right\rangle,
 \end{equation}
where $\tilde{A}(D):=\partial_{p_1}\cdot\partial_{q_2}-\partial_{p_2}\cdot\partial_{q_1}$.
\end{enumerate}
In other words, when $V(a)=0$, the sum of~\eqref{e:remainder-normal-form},~\eqref{e:contribution-potential},~\eqref{e:contribution-potential2} and~\eqref{e:remainder-weyl} is a $\ml{O}_{R,\varepsilon}(h^2)$ and we need to identify the main contribution.

We begin with~\eqref{e:remainder-normal-form} and~\eqref{e:contribution-potential2} which can be treated similarly. In particular, both~\eqref{e:remainder-normal-form} and~\eqref{e:contribution-potential2} are of the form
$$\left\langle\Op_h^w\left(c_hZ^\alpha\overline{Z}^{\beta}\right)u_h,u_h\right\rangle,$$
where $2\leq |\alpha+\beta|\leq 5$ and where $c_h$ is a symbol in the class $S^0_{\text{cl}}$ with seminorms uniformly bounded in terms of $0<h\leq 1$ and with support inside the one of $a\theta_{\pm k}(hH_3)\tilde{\chi}_{R}^B\tilde{\chi}_{\varepsilon}^C$. Using the composition rule~\eqref{e:composition-formula} for pseudodifferential operators, one can verify that, letting $N_0=|\alpha+\beta|$ and supposing for instance $\alpha>0$, every such term has the following expansion
\begin{multline*}\left\langle\Op_h^w\left(c_hZ^\alpha\overline{Z}^{\beta}\right)u_h,u_h\right\rangle=\left\langle\Op_h^w\left(c_hZ^{\alpha-1}\overline{Z}^{\beta}\right)\Op_h^w(Z)u_h,u_h\right\rangle\\
+\sum_{j=1}^{N_0-1}h^j\sum_{|\alpha'+\beta'|=j}\left\langle\Op_h^w\left(c_h^{\alpha',\beta'}Z^{\alpha'}\overline{Z}^{\beta'}\right)u_h,u_h\right\rangle+\ml{O}(h^{N_0}),
 \end{multline*}
where $c_h^{\alpha',\beta'}$ is a symbol in the class $S^0_{\text{cl}}$ with seminorms uniformly bounded in terms of $0<h\leq 1$ and with the same support properties as $c_h$. Iterating this argument, we end up with estimating terms that are of the form
$$\left\langle\Op_h^w\left(\tilde{c}_h\right)\Op_h^w(Z)^{\alpha}u_h,\Op_h^w(Z)^\beta u_h\right\rangle,$$
where $\tilde{c}_h$ is still a symbol in the class $S^0_{\text{cl}}$ with seminorms uniformly bounded in terms of $0<h\leq 1$ and the same support properties. We now fix a function $b(x,y,z,E)\in\ml{C}^\infty_c(\ml{U}_0\times\IR^*)$ which is identically equal to $1$ on the support $a(x,y,z)\theta_{\pm k}(E)$. Similarly, one can verify that $\tilde{\chi}_{4\varepsilon}^C$ and $\tilde{\chi}_{\frac{R}{4}}^B$ are identically equal to $1$ on the support of $\tilde{\chi}_\varepsilon^C\tilde{\chi}_R^B$. Using the composition rule~\eqref{e:composition-formula} for pseudodifferential operators and the support properties of the symbols, one finds that
\begin{multline*}\left\langle\Op_h^w\left(\tilde{c}_h\right)\Op_h^w(Z)^{\alpha}u_h,\Op_h^w(Z)^\beta u_h\right\rangle=\ml{O}_{R,\varepsilon}(h)\\
+\left\langle\Op_h^w\left(\tilde{c}_h\right)\Op_h^w\left(\tilde{c}_h^{(1)}\right)\Op_h^w(Z)^{\alpha}u_h,\Op_h^w\left(\tilde{c}_h^{(1)}\right)\Op_h^w(Z)^\beta u_h\right\rangle,
 \end{multline*}
where $\tilde{c}_h^{(1)}:=\tilde{\chi}_{\frac{R}{4}}^B\tilde{\chi}_{4\varepsilon}^Cb(.,hH_3)$. Using~\eqref{e:aprioribound-subtle}, we can finally conclude that both~\eqref{e:remainder-normal-form} and~\eqref{e:contribution-potential2} are $\ml{O}_{R,\varepsilon}(h)+\ml{O}_{R,\varepsilon}(\delta_h^2h^{-1})$. Hence,
\begin{equation}\label{e:intermediary-bound}
 \eqref{e:contribution-potential}+\eqref{e:remainder-weyl}=\ml{O}_{R,\varepsilon}(h)+\ml{O}_{R,\varepsilon}(\delta_h^2h^{-1}).
\end{equation}

We now turn to the estimate on~\eqref{e:remainder-weyl} which will also turn out to be negligible. Thanks to the composition rule of Theorem~\ref{t:composition}, we have that the symbol appearing in~\eqref{e:remainder-weyl} is in the class $S^{0}_{\text{cl}}$ (with uniform bounds in terms of $0<h\leq 1$ on the seminorms). In particular, it is bounded by some uniform constant $C$. Thanks to the Calder\'on-Vaillancourt Theorem, it leads to a crude upper bound of order $\ml{O}_{R,\varepsilon}(1)$. Combining this observation with~\eqref{e:intermediary-bound}, we find that
\begin{multline}\label{e:main-technical-estimate}
 \left\langle\Op_h^w\left(\left\{W,\tilde{\chi}_R^B\tilde{\chi}_{\varepsilon}^C\theta_{\pm k}(hH_3)\left(aH_3+H_2X(a)-H_1X_\perp(a)\right)\right\}\right)u_h,u_h\right\rangle\\
 =\ml{O}_{R,\varepsilon}(h^2\delta_h^{-2})+\ml{O}_{R,\varepsilon}(h).
\end{multline}

\begin{remark}
We note that this is exactly when analyzing the contribution of the term~\eqref{e:remainder-weyl} that we miss the critical case $\delta_h=h$ due to the above crude estimate $\ml{O}_{R,\varepsilon}(1)$. We refer to the end of the proof for a more detailed discussion.
 \end{remark}

We now remove the cutoffs from the Poisson bracket up to some small remainder terms on the left hand-side of~\eqref{e:main-technical-estimate}. We first notice that, if one derivative hits $\tilde{\chi}_R^B$, then the resulting term is identically $0$ for $h$ small enough (depending on $R$). Hence, we only need to understand the property of the terms that appear when one differentiates $\tilde{\chi}_{\varepsilon}^C\theta_{\pm k}(hH_3)\left(aH_3+H_2X(a)-H_1X_\perp(a)\right)$. Similarly, if we differentiate $\tilde{\chi}_\varepsilon^C$, then, we end up with a term in $S^{0}_{\text{cl}}$ such that, on its support, 
$$\sqrt{1+H_1^2+H_2^2}\leq \varepsilon|H_3|\leq2\sqrt{1+H_1^2+H_2^2}.$$
Using the support properties of such a term, we can multiply it $(1+H_1^2+H_2^2)^{-1}$ and we get a symbol in $S^{-2}_{\text{cl}}$ (thus $\lesssim h^2$ thanks to the support properties). Using the semiclassical a priori estimate~\eqref{e:apriori-semiclassical} together with the composition rule~\eqref{e:composition-formula}, we find that any such term will give an upper bound of order $\ml{O}_{R,\varepsilon}(h^2)$, hence negligibe compared with the estimate~\eqref{e:main-technical-estimate}. Finally, if one differentiates $\theta_{\pm k}(hH_3)$, it leads to negligible terms as $\theta_{\pm k}'(E)$ is equal to $0$ on the support of $\mu_{k,\infty}^\pm$. Thus, equality~\eqref{e:main-technical-estimate} becomes
\begin{multline}\label{e:main-technical-estimate2}
 \left\langle\Op_h^w\left(\tilde{\chi}_R^B\tilde{\chi}_{\varepsilon}^C\theta_{\pm k}(hH_3)\left\{W,aH_3+H_2X(a)-H_1X_\perp(a)\right\}\right)u_h,u_h\right\rangle\\=\ml{O}(h^2\delta_h^{-2})+r(h,R,\varepsilon),
\end{multline}
where $r(h,R,\varepsilon)$ verifies
\begin{equation}\label{e:bound-remainder}
 \limsup_{\varepsilon\to 0^+}\limsup_{R\to\infty}\limsup_{h\to 0^+}r(h,R,\varepsilon)=0,
\end{equation} 
Letting $h\to 0^+$, $R\to\infty$ and $\varepsilon\to 0^+$ (in this order), one finds that
$$\nu_{k,\infty}^\pm\left(aV(W)+X(a)X_\perp(W)-X_\perp(a)X(W)\right)=0.$$
Using the invariance of $\nu_{k,\infty}^\pm$ together with the fact that $V(a)=0$, we finally obtain the expected result.



\subsection{The case $\delta_h=h$}\label{r:term-order-3}

Let us now explain what would need to be done to deal with the case $\delta_h=h$. As already explained, it requires to analyze more precisely the terms coming from the remainder~\eqref{e:remainder-weyl} of order $h^3$ in the composition formula for the Weyl quantization. The two involved symbols in this term are $|Z|^2=H_1^2+H_2^2$ (which belongs to $S^{2}_{\text{cl}}$) and
$$b=H_3\sum_{\alpha+\beta\leq 4}\tilde{\ml{L}}_{\alpha,\beta}(a)(hZ)^{\alpha}(h\overline{Z})^{\beta} \frac{\theta_{\pm k}(hH_3)}{(hH_3)^{\alpha+\beta}}\tilde{\chi}_R^B\tilde{\chi}_{\varepsilon}^C.$$
Hence, we need to understand the properties of symbols that are of the form
\begin{equation}\label{e:term-of-interest-weyl}\tilde{A}(D)^3\left(|Z|^2(q_1,p_1)\left(H_3a_{\alpha,\beta}(hZ)^{\alpha}(h\overline{Z})^{\beta} \frac{\theta_{\pm k}(hH_3)}{(hH_3)^{\alpha+\beta}}\tilde{\chi}_R^B\tilde{\chi}_{\varepsilon}^C\right)(q_2,p_2)\right),
\end{equation}
when evaluated at $(q_1,p_1)=(q_2,p_2)$ and where $a_{\alpha,\beta}\in\ml{C}^\infty_c(\ml{U}_0)$ with $0\leq \alpha+\beta\leq 4$. As when dealing with~\eqref{e:main-technical-estimate}, we can remove the cutoff functions from this term (up to small remainders) and we are left with analyzing
\begin{equation}\label{e:term-of-interest-weyl2}\tilde{\chi}_R^B\tilde{\chi}_{\varepsilon}^C\tilde{A}(D)^3\left(|Z|^2(q_1,p_1)\left(a_{\alpha,\beta}\left(\frac{Z}{H_3}\right)^{\alpha}\left(\frac{\overline{Z}}{H_3}\right)^{\beta}H_3 \theta_{\pm k}(hH_3)\right)(q_2,p_2)\right),
\end{equation}
when evaluated at $(q_1,p_1)=(q_2,p_2)$ and when one differentiates at most twice with respect to $p_1$ (as $|Z|^2$ is quadratic in the $p$ variable).

We can now make use of the simplified expression~\eqref{e:expression-hamiltonien} for the Hamiltonian $|Z|^2$ which shows that it does depend on the $z$ variable. Hence differentiating $H_3$ with respect to the $p$ variable will only result into zero contributions. In other words, we can restrict ourselves to the case $1\leq \alpha+\beta\leq 4$ and to the case where the derivatives with respect to $q_1=(x_1,y_1,z_1)$ only involves derivatives with respect to the variables $(x_1,y_1)$ and~\eqref{e:term-of-interest-weyl2} becomes
\begin{equation}\label{e:term-of-interest-weyl4}\frac{\tilde{\chi}_R^B\tilde{\chi}_{\varepsilon}^C}{H_3^{\alpha+\beta-1}}\theta_{\pm k}(hH_3)\left(\partial_{p_1}\cdot\partial_{q_2}-\partial_{p_2}\cdot\partial_{q_1}\right)^3\left(|Z|^2(q_1,p_1)\left(a_{\alpha,\beta}Z^{\alpha}\overline{Z}^{\beta}\right)(q_2,p_2)\right).
\end{equation}
Moreover, the contribution coming from the terms with $\alpha+\beta=4$, will involve symbols of the form $H_j r/H_3$ with $r\in S^{0}_{\text{cl}}$ and $j\in\{1,2\}$. Thanks to the localization of the cutoff functions and to the Calder\'on-Vaillancourt Theorem, it yields a contribution $\ml{O}(\varepsilon)+\ml{O}_{R,\varepsilon}(h^{1/2})$. Thus, we would only need to discuss the cases $1\leq \alpha+\beta\leq 3$ to deal with the case $\delta_h=h$. 
 In order to conclude, one would need to analyze the precise form of~\eqref{e:term-of-interest-weyl4} for $1\leq \alpha+\beta\leq 3$ (and thus more precisely the terms appearing in the normal form procedure). In the present state of the analysis, it is not transparent if these terms will sum up to $0$. Computing these terms is a little bit involved and we do not pursue this here.

\section{Proof of the main Theorem}\label{s:endproof}
Thanks to Proposition~\ref{p:mainprop}, the proof of Theorem~\ref{t:maintheo} is almost complete. Indeed, the conclusion of this Proposition can be equivalently rewritten as
$$\forall k\geq 0,\quad\nu_{k,\infty}^\pm\left(\mathbf{A}_+(W)\mathbf{A}_-(a)-\mathbf{A}_-(W)\mathbf{A}_+(a)\right)=0,$$
where $\mathbf{A}_\pm=X\pm iX_\perp$ and where $a$ is a function verifying $V(a)=0$. We now decompose $W$ according to~\eqref{e:fourier-decomposition} and we find
$$\forall k\geq 0,\quad\sum_{n\in\IZ}\nu_{k,\infty}^\pm\left(\mathbf{A}_+(\widehat{W}_n)\mathbf{A}_-(a)-\mathbf{A}_-(\widehat{W}_n)\mathbf{A}_+(a)\right)=0.$$
Using the invariance by the flow and the facts that the functions $\mathbf{A}_+(\widehat{W}_n)\mathbf{A}_-(a)$ and $\mathbf{A}_-(\widehat{W}_n)\mathbf{A}_+(a)$ both belong to $\ml{C}^\infty_n(\ml{M})$ (as $V(a)=0$), this sum reduces to
$$\forall k\geq 0,\quad\nu_{k,\infty}^\pm\left(\mathbf{A}_+(\widehat{W}_0)\mathbf{A}_-(a)-\mathbf{A}_-(\widehat{W}_0)\mathbf{A}_+(a)\right)=0.$$
This is true for any function $a$ verifying $V(a)=0$. If we now consider a general $a\in\ml{C}^\infty(\ml{M})$, we find
$$\forall k\geq 0,\quad\nu_{k,\infty}^\pm\left(\mathbf{A}_+(\widehat{W}_0)\mathbf{A}_-(\widehat{a}_0)-\mathbf{A}_-(\widehat{W}_0)\mathbf{A}_+(\widehat{a}_0)\right)=0.$$
Applying the exact same argument that we used with $W$ (but with $a$), it ends the proof of Theorem~\ref{t:maintheo}.



\appendix

\section{The variable curvature case and the case $\delta_h=1$}\label{s:variable}

In this appendix, we briefly discuss the case where $(M,g)$ has (variable) nonvanishing curvature and where $\delta_h=1$. For simplicity, we also make the assumption that the constant $E_0$ appearing in~\eqref{e:eigenvalue-equation} is $>\max W$. In that case, the main result from~\cite{ArnaizRiviere23} adapted to our setting\footnote{Recall that we have just intertwined the roles of $X$ and $V$ compared with that reference.} shows that the measures $(\nu_{k,\infty}^\pm)_{k\geq 0}$ and $\overline{\nu}_{\infty}$ are invariant by flow
$$Y_W:=KV+X(K-\ln(E_0-W))X_\perp-X_\perp(K-\ln(E_0-W))X.$$
If we make the extra-assumption that $W$ is the pullback of a function on $M$ (i.e. independent of $z$) as $K$ is, then $Y_W(K-\ln(E_0-W))=0$.
In particular, each orbit of the flow is contained in a connected component of the level sets
$$\ml{E}_{K_0}:=\{(m,v)\in SM: K(m)=\ln(E_0-W(m))+K_0\}.$$
As a consequence, any invariant probability measure of the vector $Y_W$ is a convex combination of the invariant probability measures that are supported inside a connected component $\mathcal{C}$ of some level set $\ml{E}_{K_0}$. We also emphasize that, for $W$ independent of $v$, the first part of Corollary~\ref{c:maincoro} always holds true with $K-\ln(E_0-W)$ replacing $\widehat{W}_0$, i.e. for all $k\geq 0$ and for all $a\in\ml{C}^1(M)$,
\begin{equation}\label{e:variable-curvature}
\int_M\nabla_g^\perp (K-\ln(E_0-W))(a)d\left(\Pi_*\nu_{k,\infty}^\pm\right)=0.
\end{equation}
In fact, this also remains true for $\overline{\nu}_\infty$ in that setting.

Let us conclude with describing the allowed invariant measures on $SM$ in the case where $W\equiv 0$. In that setting, one has in fact $[X(K)X_\perp-X_\perp(K)X,KV]=0.
$ From this, we have two options regarding invariant measures:
\begin{itemize}
 \item \textbf{The connected component $\ml{C}$ of $\{K(m)=K_0\}$ contains one (or several) critical point of $K$}. In that case, invariant probability measures are convex combinations of
 $$\left\{\delta_{m_0}(m)\otimes \frac{dv}{(2\pi K_0)}:m_0\in\text{Crit}(K)\cap \ml{C}\right\}.$$
 \item \textbf{The connected component $\ml{C}$ contains no critical point of $K$}. In that case, the flow induced by the vector field $\nabla_g^\perp (K)$ on $\Pi(\ml{C})$ is periodic of minimal period $T(\ml{C})>0$. As the vector fields $KV$ and $X(K)X_\perp-X_\perp(K)X$ commute, the orbits are either all periodic, or all dense in the torus $\ml{C}$. The second case occurs if and only if $T(\ml{C}) K_0\notin\pi\mathbb{Q}$. In that case, the only invariant measure is the Lebesgue measures induced by these two vector fields on $\ml{C}$. In the periodic case, all points have the same period given by $nT(\ml{C})K_0$ with $n$ the smallest positive integer such that $n T(\ml{C})K_0\in 2\pi\IZ_+$. Note that there does not seem to be a reason why one of the two situations does not occur for a generic metric $g$.
\end{itemize}

\section{Invariance properties of $\nu_{\text{comp}}$}

\label{a:compact}

In this appendix, we complete the discussion from~\S\ref{ss:compact} by briefly describing the invariance properties of the measure $\nu_{\text{comp}}$. In particular, we connect them with the magnetic Hamiltonian on $T^*M$ induced by the metric $g$. Recall from~\S\ref{ss:compact} that this part of the limit measure is the projection on $\ml{M}$ of a (standard) semiclassical measure which is invariant under the Hamiltonian flow on $T^*\ml{M}$ associated with $H_1^2+H_2^2$ and restricted to the energy layer $\{H_1^2+H_2^2=E_0\}$, with $E_0>0$. In the local isothermal coordinates of Section~\ref{s:chart}, one has
\begin{equation}\label{e:expression-hamiltonien}
H_1^2+H_2^2=e^{-2\lambda}\left((\xi+\partial_y\lambda \zeta)^2+(\eta-\partial_x\lambda \zeta)^2\right).
 \end{equation}
Writing down the Hamilton-Jacobi equation, one finds that $\zeta$ is constant along the trajectories of the flow. Hence, one can fix the value of $\zeta$ to be equal to some $B\in\IR$ and describe the evolution for a fixed $B$. It induces an Hamiltonian flow on $T^*\IR^2$ whose Hamiltonian function is given by
$$H_B(x,y,\xi,\eta):=e^{-2\lambda}\left((\xi+B\partial_y\lambda)^2+(\eta-B\partial_x\lambda )^2\right).$$
The Hamiltonian $H_B$ can be identified with a magnetic Hamiltonian for the metric $g$ and the magnetic potential $\Theta=B\left(\partial_y\lambda dx-\partial x\lambda dy\right).$ The corresponding magnetic field is then given by
$$d\Theta=-B\left(\partial_x^2\lambda+\partial_y^2\lambda\right)dx\wedge dy=BK(x,y)e^{2\lambda(x,y)}dx\wedge dy=BK(x,y)\text{Vol}_g(dx,dy).$$

Finally, as $H_B$ is preserved under the action of the Hamiltonian flow, it is natural to set $(\cos z_1,\sin z_1)=\frac{e^{-\lambda}}{\sqrt{E_0}}(\xi+B\partial_y\lambda,\eta-B\partial_x\lambda)$ and the Hamilton-Jacobi equation on the energy layer $\{H_B=E_0\}$ becomes (after simplifications)
$$x'=2\sqrt{E_0}e^{-\lambda}\cos z_1,\quad y'(t)=2\sqrt{E_0}e^{-\lambda}\sin z_1,$$
and
$$z_1'=2\sqrt{E_0}e^{-\lambda}(-\sin z_1 \partial_x\lambda+\cos z_1\partial_y\lambda)+2BK=z'+2KB.$$
Hence, up to multiplication by $1/2\sqrt{E_0}$, we recognize the vector field $X+\frac{BK}{\sqrt{E_0}}V$ (in the coordinates $(x,y,z_1)$ adapted to the geometry of the magnetic Hamiltonian).


\begin{remark}\label{r:magnetic-zelditch} When considering magnetic eigenfunctions as in~\S\ref{ss:compact} with $B_h\rightarrow B\in\IR$, i.e. joint solutions to~\eqref{e:eigenvalue-equation} and to~\eqref{e:magnetic-eigenvalues}, the corresponding semiclassical measure on $T^*\ml{M}$ are carried by the compact set $\{H_1^2+H_2^2=E_0\}\cap \{H_3=B\}$ and invariant by the Hamiltonian flows of $H_1^2+H_2^2$ and $H_3$ (which acts like $V$ along the $z$ variable). 
\end{remark}

\begin{remark} Note that the calculations that we made so far in this appendix are independent of the fact that the curvature is constant. 
\end{remark}

The regime $|H_3|=|B|\rightarrow\infty$ that we consider in this article is in some sense the opposite of the setting considered by Zelditch in~\cite[\S3.20]{Zelditch92b} (for $K\equiv -1$ and $W\equiv 0$). Indeed, in this reference, following earlier works of Schrader and Taylor~\cite{SchraderTaylor89}, he proved a quantum ergodicity property for magnetic eigenfunctions in the regime where $|B|$ is finite with our conventions (as in Remark~\ref{r:magnetic-zelditch}). Recall that, in this reference, he rather considered the elliptic eigenvalue equation
\begin{equation}\label{e:KK-eigenvalue}
-h^2(X^2+X_\perp^2+V^2)\psi_h=\tilde{E}_h\psi_h,\quad\|\psi_h\|_{L^2}=1,\quad \tilde{E}_h\rightarrow \tilde{E}_0\in\IR_+^*\quad\text{as}\quad h\rightarrow 0^+
\end{equation}
on $\Gamma\backslash PSL(2,\IR)$. This is the so-called Kaluza-Klein Laplacian. In particular, his main operator is elliptic rather than hypoelliptic as our operator $\Delta_{\text{sR}}$. This extra property modifies the oscillation properties of eigenfunctions which now oscillate like $h^{-1}$ and not in a range lying between $h^{-1}$ and $h^{-2}$. In particular, the standard theory of semiclassical measures applies and any such measure is a \emph{probability}\footnote{Due to ellipticity, there is now no escape of mass at infinity.} measure carried by the compact set $\{H_1^2+H_2^2+H_3^2=\tilde{E}_0\}$ and invariant by the Hamiltonian flow of $H_1^2+H_2^2+H_3^2$. In~\cite{Zelditch92b}, Zelditch considered in addition that eigenfunctions verifies~\eqref{e:magnetic-eigenvalues} (as $V$ also commutes with his operator) which leads for our hypoelliptic operator to the eigenvalue equation:
\begin{equation}\label{e:KK-hypoelliptic}-h^2\Delta_{\text{sR}}\psi_h=(\tilde{E}_h-B_h^2)\psi_h.\end{equation}
Using the tools on semiclassical measures from~\cite[Ch.~5]{Zworski12}, such joint eigenfunctions result into measures that are also carried on the set $\{H_3=B\}$ and invariant by the flow generated by $H_3$. The main focus in~\cite{Zelditch92b} is on the case $B^2<\tilde{E}_0$ where Zelditch proves a quantum ergodicity Theorem provided $B^2\leq \tilde{E}_0/2$.

\begin{remark}One can deduce from~\eqref{e:KK-hypoelliptic} that $E_h=\tilde{E}_h-B_h^2\geq 0$ (thus $B_h=\ml{O}(1)$). Recall that, all along the article, we made the assumption that the eigenvalue $E_h$ does not tend to $0$ in view of enhancing the hypoelliptic behaviour of eigenfunctions in the high frequency regime. With the conventions of~\eqref{e:KK-eigenvalue}, the measures $\overline{\nu}_\infty$ and $(\nu_{k,\infty}^\pm)_{k\geq 0}$ would thus arise by picking $\tilde{E}_0=B^2$, and they would correspond to the part of the semiclassical measure carried on $T^*\ml{M}$ by $\{H_1^2+H_2^2=0\}\cap\{H_3=\pm\sqrt{\tilde{E}}_0\}$ with the choice of semiclassical scaling from~\eqref{e:KK-eigenvalue}. In order to recover the semiclassical scaling of the introduction, one would need to replace $h$ by $\tilde{h}:=\frac{h}{\sqrt{\tilde{E}_h-B_h^2}}$ in~\eqref{e:KK-hypoelliptic} when $\tilde{E}_0=B^2$.
 \end{remark}



\section{Pseudodifferential calculus on $\IR^2\times\IS^1$}
\label{a:pdo}

In this appendix, we review a few facts about semiclassical analysis on $T^*(\IR^2\times\IS^1)$ that are used all along our analysis of the measure at infinity. A standard textbook is~\cite{Zworski12} which treats the case of $T^*\IR^3$ in great details in Chapter~$4$. The case of $T^*(\IR^2\times\IS^1)$ can be handled similarly by proper use of Fourier series along the $z$-variable rather than Fourier transform. See for instance~\cite[\S 5.3]{Zworski12} for a detailed discussion in the case of $T^*\IT^3$.

For a nice enough smooth function $a$ on $T^*(\IR^2\times\IS^1)$ (say compactly supported) and for every $h>0$, the Weyl (semiclassical) quantization of $a$ is defined, for all $u$ in $u\in\ml{C}^\infty_c(\IR^3)$, by
\begin{equation}\label{e:weyl} \text{Op}_h^w(a)\left(u\right)(q):=\frac{1}{(2\pi h)^3}\int_{\IR^6}e^{\frac{i}{h}(q-q')\cdot p}a\left( \frac{q+q'}{2},p\right)u(q')dq'dp.
\end{equation}
Using the periodicity along the $\mathbb{S}^1$-variable, one can verify that this definition extends to smooth test functions $u\in\ml{C}^\infty_c(\IR^2\times\IS^1)$~\cite[\S5.3.1]{Zworski12}.

Regarding the regularity needed for $a$, this definition still makes sense when working with smooth functions $a$ belonging to the class of (Kohn-Nirenberg) symbols~\cite[\S 9.3]{Zworski12}:
$$S^m_{\text{cl}}(T^*(\IR^2\times\IS^1))=\left\{a\in\ml{C}^\infty(T^*(\IR^2\times\IS^1)):\ \forall(\alpha,\beta)\in\IZ_+^{6},\ P_{m,\alpha,\beta}(a)<+\infty\right\},$$
where $m\in\IR$ and
$$P_{m,\alpha,\beta}(a):=\sup_{(q,p)}\{\langle p\rangle^{-m+|\beta|}|\partial_q^\alpha\partial_p^\beta a(x,\xi)|\}.$$
In other words, we gain some decay in $p$ when differentiating along the $p$-variable. Even if such a decay is not necessary to work in an Euclidean set-up, it is of crucial importance in our analysis to have this extra decay in view of dealing with the escape at infinity in the fibers. This class of symbols is often denoted by $S^{m}_{\text{KN}}$ in microlocal set-ups where one wants to distinguish the smaller class of homogeneous symbols. 

A nice property of the Weyl quantization is that, for a real-valued $a$, $\Op_h^w(a)$ is a (formally) selfadjoint operator~\cite[Th.~4.1]{Zworski12}. Another property that we extensively use all along this article is the composition rule for pseudodifferential operators\footnote{Technically speaking, this reference deals with the Weyl quantization on $T^*\IR^3$ but the proof works as well in our set-up.}~\cite[Th.~9.5, Th.~4.12]{Zworski12}
\begin{theorem}\label{t:composition} Let $a\in S^{m_1}_{\operatorname{cl}}(T^*(\IR^2\times\IS^1))$ and $b\in S^{m_2}_{\operatorname{cl}}(T^*(\IR^2\times\IS^1))$. Then, there exists $c\in S^{m_1+m_2}_{\text{cl}}(T^*(\IR^2\times\IS^1))$ (depending on $h$) such that
\begin{equation}\label{e:composition-formula}\Op_h^w(a)\circ\Op_h^w(b)=\Op_h^w(c).\end{equation}
Moreover,
$$c(q,p)=\sum_{k=0}^N\frac{h^k}{k!}\left(A(D)\right)^k(a(q_1,p_1)b(q_2,p_2))|_{q_1=q_2=q, p_1=p_2=p}+\ml{O}_{S^{m_1+m_2-N-1}}(h^{N+1}),$$
 where the constant in the remainder depends on a finite number of seminorms of $a$ and $b$ (depending on $N$ and on the seminorm in $S^{m_1+m_2-N-1}_{\operatorname{cl}}$), and where 
 $$A(D):=\frac{1}{2i}\left(\partial_{p_1}\cdot\partial_{q_2}-\partial_{p_2}\cdot\partial_{q_1}\right).$$
\end{theorem}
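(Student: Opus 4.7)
The plan is to derive the Moyal product expression for $c$ by direct manipulation of the iterated kernel, and then to extract the asymptotic expansion in $h$ by Taylor expanding a Fourier multiplier. First, I would apply the defining formula~\eqref{e:weyl} twice to $\Op_h^w(a)\Op_h^w(b)u$; this produces an oscillatory integral over two copies of phase space, with two momentum variables and one auxiliary position variable. A change of variables recentering the integral on the midpoint of the input and output positions, together with a corresponding shift of one momentum variable, brings the expression back into Weyl form~\eqref{e:weyl} with a new symbol $c(q,p)$ equal to the Moyal (twisted) product of $a$ and $b$. Standard Fourier calculus then identifies this twisted product as
\[
c(q,p) = \exp\!\bigl(\tfrac{ih}{2}\sigma(D_{q_1},D_{p_1};D_{q_2},D_{p_2})\bigr)\bigl[a(q_1,p_1)b(q_2,p_2)\bigr]\Big|_{q_1=q_2=q,\;p_1=p_2=p},
\]
where $\sigma$ is the standard symplectic form on $T^*(\IR^2\times\IS^1)$.

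Next, I would Taylor expand the exponential up to order $N$. This produces immediately the sum in the statement, whose $k$-th term is $\tfrac{h^k}{k!}A(D)^k(a\otimes b)$ restricted to the diagonal, with $A(D)=\tfrac{1}{2i}(\partial_{p_1}\!\cdot\!\partial_{q_2}-\partial_{p_2}\!\cdot\!\partial_{q_1})$. Because each $\partial_{p}$ applied to a Kohn--Nirenberg symbol improves momentum decay by a factor $\langle p\rangle^{-1}$, the Leibniz rule shows that the $k$-th term belongs to $S^{m_1+m_2-k}_{\mathrm{cl}}$, and the partial sums are therefore coherent in the appropriate symbol classes.

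The third and most delicate step is to prove that the $(N{+}1)$-th remainder itself lies in $S^{m_1+m_2-N-1}_{\mathrm{cl}}$, with seminorm bounds depending on only finitely many seminorms of $a$ and $b$. I would express the remainder as an oscillatory integral with phase $\sigma/h$ against the tensor $a\otimes b$ and perform iterated integration by parts in the phase: each integration by parts trades a factor of $h$ against a derivative of $a$ or $b$, and the resulting $\langle p\rangle^{-1}$-gain propagates exactly because $a$ and $b$ are Kohn--Nirenberg symbols. The circular factor $\IS^1$ presents no substantive difficulty: Fourier series in $z$ and discrete summation by parts play the role of the Euclidean Fourier transform and integration by parts, paralleling the $T^*\IT^n$-argument sketched in~\cite[\S5.3]{Zworski12}; the core calculation is the $T^*\IR^n$-case of~\cite[Ch.~4]{Zworski12}. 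The main obstacle will be the bookkeeping required to track uniform seminorm bounds in $(h,a,b)$ together with sharp $\langle p\rangle$-decay of the remainder; this is typically handled by a dyadic decomposition of phase space combined with stationary and non-stationary phase arguments on each dyadic shell.
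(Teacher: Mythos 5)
Your proposal is correct and follows the standard textbook argument: derive the Moyal/twisted product via the iterated Weyl quantization kernel, Taylor-expand the exponential Fourier multiplier, and bound the remainder by integration by parts/stationary phase in the Kohn--Nirenberg classes, with Fourier series in the $z$-variable replacing the Euclidean Fourier transform on the $\mathbb{S}^1$ factor. This is precisely the route the paper takes: it gives no proof of Theorem~\ref{t:composition} itself but cites Zworski [Th.~9.5, Th.~4.12] for the $T^*\mathbb{R}^3$ case and notes (as you do) that the $T^*(\mathbb{R}^2\times\mathbb{S}^1)$ adaptation uses Fourier series along $z$ as sketched in~\cite[\S5.3]{Zworski12}.
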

In particular, we can see from this result that $c=\ml{O}_{S^{m_1+m_2-N-1}}(h^{N+1})$ if $a$ and $b$ have disjoint supports. We can also verify that, all the even powers in $h$ in the asymptotic expansion of $[\Op_h^w(a),\Op_h^w(b)]$ cancels out and that the first term is given by $\frac{h}{i}\{a,b\}.$ Another key property for us is the Calder\'on-Vaillancourt Theorem~\cite[Ch.~5]{Zworski12} that states the existence of constants $C_0, N_0$ such that, for every $a\in S^{0}_{\text{cl}}(T^*(\IR^2\times\IS^1))$,
\begin{equation}\label{e:calderon}
 \left\|\Op_h^w(a)\right\|_{L^2\rightarrow L^2}\leq C_0\sum_{|\alpha|\leq N_0}h^{\frac{|\alpha|}{2}}\|\partial^\alpha a\|_{\infty}.
\end{equation}

\bibliographystyle{amsalpha}

\providecommand{\bysame}{\leavevmode\hbox to3em{\hrulefill}\thinspace}
\providecommand{\MR}{\relax\ifhmode\unskip\space\fi MR }
\providecommand{\MRhref}[2]{%
  \href{http://www.ams.org/mathscinet-getitem?mr=#1}{#2}
}
\providecommand{\href}[2]{#2}
\begin{thebibliography}{}

\end{thebibliography}


\begin{thebibliography}{A}

\bibitem[AM22]{ArnaizMacia22}
V.~Arnaiz and F.~Maci\`a.
\newblock Concentration of quasimodes for perturbed harmonic oscillators.
\newblock 2022.
\newblock preprint arXiv:2206.10307.

\bibitem[AR23]{ArnaizRiviere23}
V.~Arnaiz and G.~Rivi\`ere.
\newblock Quantum limits of perturbed sub-{R}iemannian contact {L}aplacians in
  dimension 3.
\newblock 2023.
\newblock preprint arXiv:2306.10757.

\bibitem[AS23]{ArnaizSun22}
V.~Arnaiz and C.~Sun.
\newblock Sharp resolvent estimate for the {B}aouendi-{G}rushin operator and
  applications.
\newblock {\em Comm. Math. Phys.}, 400(1):541--637, 2023.

\bibitem[BS22]{BurqSun22}
N.~Burq and C.~Sun.
\newblock Time optimal observability for {G}rushin {S}chr\"odinger equation.
\newblock {\em Analysis and PDE}, 15(6):1487--1530, 2022.

\bibitem[Bur97]{Burq97}
N.~Burq.
\newblock Mesures semi-classiques et mesures de d\'efaut.
\newblock {\em Ast\'erisque}, (245):Exp.\ No.\ 826, 4, 167--195, 1997.
\newblock S\'eminaire Bourbaki, Vol. 1996/97.

\bibitem[BVN21]{BoilVuNgoc21}
G.~Boil and S.~Vu~Ngoc.
\newblock Long-time dynamics of coherent states in strong magnetic fields.
\newblock {\em Am. J. Math.}, 143(6):1747--1789, 2021.

\bibitem[CdVHT18]{ColindeVerdiereHillairetTrelat15}
Y.~Colin~de Verdi\`ere, L.~Hillairet, and E.~Tr\'{e}lat.
\newblock Spectral asymptotics for sub-{R}iemannian {L}aplacians, {I}:
  {Q}uantum ergodicity and quantum limits in the 3-dimensional contact case.
\newblock {\em Duke Math. J.}, 167(1):109--174, 2018.

\bibitem[Cha20]{Charles20}
L.~Charles.
\newblock Landau levels on a compact manifold.
\newblock 2020.
\newblock Preprint arXiv:2012.14190.

\bibitem[Cha21]{Charles21}
L.~Charles.
\newblock On the spectrum of non degenerate magnetic {L}aplacian.
\newblock 2021.
\newblock Preprint arXiv:2109.05508, to appear in Analysis and PDE.

\bibitem[Dem85]{Demailly85}
J.-P. Demailly.
\newblock Champs magn{\'e}tiques et in{\'e}galit{\'e}s de {Morse} pour la
  d''-cohomologie. ({Magnetic} fields and {Morse} inequalities for
  d''-cohomology).
\newblock {\em Ann. Inst. Fourier}, 35(4):189--229, 1985.

\bibitem[FKF21]{FermanianFischer21}
C.~Fermanian-Kammerer and V.~Fischer.
\newblock Quantum evolution and sub-{Laplacian} operators on groups of
  {Heisenberg} type.
\newblock {\em J. Spectr. Theory}, 11(3):1313--1367, 2021.

\bibitem[FKL21]{FermanianLetrouit21}
C.~Fermanian~Kammerer and C.~Letrouit.
\newblock Observability and controllability for the {Schr{\"o}dinger} equation
  on quotients of groups of {Heisenberg} type.
\newblock {\em J. {\'E}c. Polytech., Math.}, 8:1459--1513, 2021.

\bibitem[FT15]{FaureTsujii15}
F.~Faure and M.~Tsujii.
\newblock {\em Prequantum transfer operator for symplectic {Anosov}
  diffeomorphism}, volume 375 of {\em Ast{\'e}risque}.
\newblock Paris: Soci{\'e}t{\'e} Math{\'e}matique de France (SMF), 2015.

\bibitem[GU86]{GuilleminUribe86}
V.~Guillemin and A.~Uribe.
\newblock Clustering theorems with twisted spectra.
\newblock {\em Math. Ann.}, 273:479--506, 1986.

\bibitem[GU88]{GuilleminUribe88}
V.~Guillemin and A.~Uribe.
\newblock The {Laplace} operator on the {{\(n\)}}th tensor power of a line
  bundle: {Eigenvalues} which are uniformly bounded in n.
\newblock {\em Asymptotic Anal.}, 1(2):105--113, 1988.

\bibitem[GU89]{GuilleminUribe89}
V.~Guillemin and A.~Uribe.
\newblock Circular symmetry and the trace formula.
\newblock {\em Invent. Math.}, 96(2):385--423, 1989.

\bibitem[HK11]{HelfferKordyukov11}
B.~Helffer and Y.~A. Kordyukov.
\newblock Semiclassical spectral asymptotics for a two-dimensional magnetic
  {Schr{\"o}dinger} operator: the case of discrete wells.
\newblock In {\em Spectral theory and geometric analysis. International
  conference in honor of Mikhail Shubin's 65th birthday}, pages 55--78.
  Providence, RI: American Mathematical Society (AMS), 2011.

\bibitem[Hor67]{Hormander67}
L.~Hormander.
\newblock Hypoelliptic second order differential equations.
\newblock {\em Acta Math.}, 119:147--171, 1967.

\bibitem[Kor22]{Kordyukov22b}
Y.A. Kordyukov.
\newblock Trace formula for the magnetic {L}aplacian at zero energy level.
\newblock 2022.
\newblock Preprint arXiv:2208.04599.

\bibitem[KT22]{KordyukovTaimanov22}
Y.A. Kordyukov and I.A. Taimanov.
\newblock Trace formula for the magnetic {Laplacian} on a compact hyperbolic
  surface.
\newblock {\em Regul. Chaotic Dyn.}, 27(4):460--476, 2022.

\bibitem[Mor22a]{Morin22}
L.~Morin.
\newblock Review on spectral asymptotics for the semiclassical {Bochner}
  {Laplacian} of a line bundle.
\newblock {\em Confluentes Math.}, 14(1):65--79, 2022.

\bibitem[Mor22b]{Morin19}
L.~Morin.
\newblock A semiclassical {B}irkhoff normal form for symplectic magnetic wells.
\newblock {\em J. Spectr. Theory}, 12(2):459--496, 2022.

\bibitem[MR16]{MaciaRiviere16}
F.~Maci{\`a} and G.~Rivi{\`e}re.
\newblock Concentration and {N}on-{C}oncentration for the {S}chr\"odinger
  {E}volution on {Z}oll {M}anifolds.
\newblock {\em Comm. Math. Phys.}, 345(3):1019--1054, 2016.

\bibitem[MR18]{MaciaRiviere18}
F.~Maci{\`a} and G.~Rivi{\`e}re.
\newblock Two-microlocal regularity of quasimodes on the torus.
\newblock {\em Anal. PDE}, 11(8):2111--2136, 2018.

\bibitem[MR19]{MaciaRiviere19}
F.~Maci{\`a} and G.~Rivi{\`e}re.
\newblock Observability and quantum limits for the {Schr{\"o}dinger} equation
  on {{\(\mathbb{S}^d\)}}.
\newblock In {\em Probabilistic methods in geometry, topology and spectral
  theory.}, pages 139--153. Providence, RI: American Mathematical Society
  (AMS); Montreal: Centre de Recherches Math{\'e}matiques (CRM), 2019.

\bibitem[PSU22]{PaternainSaloUhlmann22}
G.~Paternain, M.~Salo, and G.~Uhlmann.
\newblock {\em Geometric inverse problems, with emphasis on two dimensions}.
\newblock CUP, 2022.
\newblock To appear in Cambridge University Press, avalaible at
  http://users.jyu.fi/$\sim$salomi/lecturenotes/index.html.

\bibitem[Riv22]{Riviere22}
G.~Rivi{\`e}re.
\newblock Local {{\(L^p\)}} norms of {S}chr{\"o}dinger eigenfunctions on
  {{\({\mathbb{S}}^2\)}}.
\newblock {\em Ann. Math. Qu{\'e}.}, 46(1):93--119, 2022.

\bibitem[RS76]{RothschildStein76}
L.~P. Rothschild and E.~M. Stein.
\newblock Hypoelliptic differential operators and nilpotent groups.
\newblock {\em Acta Math.}, 137(3-4):247--320, 1976.

\bibitem[RVN15]{RaymondVuNgoc15}
N.~Raymond and S.~Vu~Ngoc.
\newblock Geometry and spectrum in 2{D} magnetic wells.
\newblock {\em Ann. Inst. Fourier (Grenoble)}, 65(1):137--169, 2015.

\bibitem[ST89]{SchraderTaylor89}
R.~Schrader and M.~E. Taylor.
\newblock Semiclassical asymptotics, gauge fields, and quantum chaos.
\newblock {\em J. Funct. Anal.}, 83(2):258--316, 1989.

\bibitem[Wei77]{Weinstein77}
A.~Weinstein.
\newblock Asymptotics of eigenvalue clusters for the {L}aplacian plus a
  potential.
\newblock {\em Duke Math. J.}, 44(4):883--892, 1977.

\bibitem[Zel92]{Zelditch92b}
S.~Zelditch.
\newblock On a ``{Quantum} {Chaos}'' theorem of {R}. {Schrader} and {M}.
  {Taylor}.
\newblock {\em J. Funct. Anal.}, 109(1):1--21, 1992.

\bibitem[Zwo12]{Zworski12}
M.~Zworski.
\newblock {\em Semiclassical analysis}, volume 138 of {\em Graduate Studies in
  Mathematics}.
\newblock American Mathematical Society, Providence, RI, 2012.
\end{thebibliography}

\end{document}